\documentclass{article}

\usepackage{amsmath} 
\usepackage{amssymb}
 \usepackage{amsthm} 
 \usepackage{enumerate} 
 \usepackage[mathscr]{eucal}
 \usepackage{tikz} 
\usetikzlibrary{arrows}
\usepackage{graphicx}
\usepackage{caption}
\usepackage{subcaption}

\setlength{\textwidth}{121.9mm} 
\setlength{\textheight}{176.2mm}

\newcounter{cont}
\newtheorem{proposition}{Proposition}[section]
\newtheorem{definition}{Definition}[section]
\newtheorem{lemma}{Lemma}[section]
\newtheorem{theorem}{Theorem}[section]
\newtheorem{corollary}{Corollary}[section]
\newtheorem{remark}{Remark}[section]

\newcommand{\sem}[1]{\mathbf{#1}} 
\newcommand{\semd}[1]{\mathbf{{#1}_d}}
\newcommand{\dm}[1]{\mathbf{\hat{#1}}} 
\newcommand{\ol}[1]{\overline{#1}}

\title{The Riesz  hull of a semisimple MV-algebra} 
\author{Denisa Diaconescu and Ioana Leu\c{s}tean\\
{\small Department of Computer Science,} \\
{\small Faculty of Mathematics and Computer Science, University of Bucharest,}\\
{\small Academiei nr.14, sector 1, C.P. 010014,  Bucharest, Romania}\\   
{\small Emails: ddiaconescu@fmi.unibuc.ro,  ioana@fmi.unibuc.ro}}

\date{}

\begin{document}
\maketitle
\begin{center}
{\em  Dedicated to Prof. Antonio Di Nola on the occasion of his  65th birthday.}
\end{center}

\begin{abstract}
MV-algebras and Riesz MV-algebras are categorically equivalent to abelian lattice-ordered groups with strong unit and, respectively, with Riesz spaces (vector-lattices) with strong unit. A standard construction in the literature of lattice-ordered  groups is the {\em vector-lattice hull} of an archimedean lattice-ordered group. Following a similar approach, in this paper we define the 
{\em Riesz hull} of a semisimple MV-algebra.

\end{abstract}

\section{Introduction}

MV-algebras were first defined by Chang \cite{chang-58} as algebraic structures corresponding to the $\infty$-valued \L ukasiewicz logic. 
An {\em MV-algebra}  is a structure $(A,\oplus,^{*},0)$, where 
$(A,\oplus, 0)$ is an abelian monoid and the following identities hold 
for all $x,y\in A$: $(x^{*})^{*}=x$, $0^{*}\oplus x=0^{*}$ and 
 $(x^{*}\oplus y)^{*}\oplus y=(y^{*}\oplus x)^{*}\oplus x$. One of the main engines of  MV-algebra theory is the categorical equivalence between  MV-algebras and abelian lattice-ordered groups with strong unit \cite{Mundici-1986}.
As a consequence, any MV-algebra is isomorphic to the unit interval $[0,u]$ of an abelian lattice-ordered group $(G,u)$, with operations defined by
$x^*=u-x$ and  $x\oplus y=u\wedge(x+y)$.
MV-algebras stand to \L ukasiewicz logic as boolean algebras stand to classical logic: 
an equation holds in any MV-algebra if and only if it holds in the real interval  $[0,1]$ endowed with the following operations
\begin{center}
 $x\oplus y=\min\{1,x+y\}$ and $x^*=1-x$,
\end{center}
 for every $x,y\in [0,1]$. The real interval $[0,1]$ with the above operations is the {\em standard MV-algebra} and it is usually denoted by $[0,1]_{MV}$.

Adding a  product operation to the signature of MV-algebras was   a natural step, which  led to fruitful results, both in logic and algebra.  
Once  the MV-algebra structure is enriched, categorical  equivalences with particular lattice-ordered structures are  proved.

{\em  PMV-algebras} are defined  in \cite{dinola-dvurecenskij} as MV-algebras endowed with a product operation  $\cdot:A\times A\to A$, satisfying some particular identities. The category of PMV-algebras is equivalent with the category of   lattice-ordered rings with strong unit. In \cite{dinola-leustean-2013} the internal product  is replaced by a scalar multiplication with scalars from $[0,1]$, so  MV-algebras are endowed with a map $\cdot :[0,1]\times A\to A$. The structures  obtained in this way are called {\em Riesz MV-algebras} and they are categorically equivalent with Riesz spaces (vector-lattices) with strong unit. The real interval $[0,1]$ endowed  with the natural product  generates the variety of Riesz MV-algebras. Note that,  in the case of PMV-algebras,  $[0,1]$ generates only a proper quasi-variety \cite{Montagna}.

A standard construction in the literature of lattice-ordered  groups is the {\em vector-lattice hull} of an archimedean lattice-ordered group, 
defined by Conrad in \cite{Conrad-1971} and further analyzed by Bleier in \cite{Bleier-1971}. We also refer to \cite{JM-2002} for an extensive treatment of hull classes for archimedean lattice-ordered groups. 

We briefly  remind Conrad's definition.  If $G$ is an archimedean lattice-ordered group, then the {\em v-hull} of $G$ is a vector-lattice $U$ such  that $G$ is an essential subgroup of $U$ and no proper $\ell$-subspace of $U$ contains $G$. Assume 
$G_d$ is the divisible hull of $G$  and $\hat{G}_d$ is the Dedekind-MacNeille completion of $G_d$. Hence the vector-lattice generated by 
$G_d$ in $\hat{G}_d$, denoted by ${\mathbf R}(G)$, is the {\em v}-hull of $G$. Moreover, Bleier proved that the correspondence $G\mapsto {\mathbf R}(G)$ is functorial.

 In this paper  we investigate a similar construction for semisimple MV-algebras and semisimple  Riesz MV-algebras. If $A$ is a semisimple MV-algebra we say that a Riesz MV-algebra $U$ is the {\em Riesz hull} of $A$  if $A$ is essentially embedded in $U$ and $A$ is a set of generators for  $U$. In Section \ref{shull} we prove that   
{\em  any semisimple MV-algebra has a Riesz hull}. Moreover, the Riesz hull of the free MV-algebra over a set $X$ is the free Riesz
MV-algebra over $X$. In Section \ref{chull} we prove that the construction of the Riesz hull is functorial. Moreover, the  hull functor commutes with the categorical equivalences between the corresponding  classes of MV-algebras and lattice-ordered groups.

We chose to make direct proofs in the theory of MV-algebras.  
 Alternative proofs can be given   using Conrad's construction and various  preservation properties of the  categorical equivalence between MV-algebras and lattice-ordered groups, but we find the direct approach more relevant  for our purpose. 

In Section \ref{unu} and \ref{doi} we  recall the basic results on MV-algebras and Riesz MV-algebras that are required for our development. We refer to \cite{Darnel}  for background knowledge on lattice-ordered groups, to \cite{Zaanen-1971} for Riesz spaces and to 
\cite{Cohn-1965} for universal algebra.   

\section{MV-algebras}\label{unu}

\begin{definition}\begin{rm}
An {\em MV-algebra} is a structure $(A,\oplus,^*,0)$ of type (2,1,0)  which satisfies the following:
\begin{list}
{(MV\arabic{cont})}{\usecounter{cont}\setlength{\leftmargin}{0.5cm}}
	\item $(A,\oplus,0)$ is an abelian monoid,
	\item $({a^*})^* = a$,
	\item $0^* \oplus a = 0^*$,
	\item $(a^* \oplus b)^* \oplus b = (b^* \oplus a)^* \oplus a$,
\end{list}
for any $a,b \in A$.

We refer to \cite{cignoli-dottaviano-mundici} for all the unexplained notions related to MV-algebras.
\end{rm}\end{definition}

 In any MV-algebra $A$ we can define the following:
\begin{center}
\begin{tabular}{cc}
$1 \stackrel{def}{=} 0^*$, & $a \odot b \stackrel{def}{=} (a^* \oplus b^*)^*$,   \\

$a \vee b \stackrel{def}{=} (a \odot b^*) \oplus b$, & $a \wedge b \stackrel{def}{=} (a \oplus b^*) \odot b$, \\
\end{tabular}
\end{center}
for any $a,b \in A$. Hence $(A,\vee,\wedge, 0,1)$ is a bounded distributive lattice  such that
\begin{center}
$a\leq b$ if and only if $a\odot b^*=0$.
\end{center} 

The notions of MV-homomorphism and MV-subalgebra are defined as usual.

We recall that  a {\em lattice-ordered group} (an {\em  $\ell$-group})  is a structure $(G,+,0,\leq)$ such that $(G,+,0)$ is a group, $(G,\leq)$ is a lattice and  any group translation is isotone \cite{Darnel}. 
An element $u\in G$ is  a {\em strong unit} if $u\geq 0$ and for any $x\in G$ there is a natural number $n$ such that $x\leq nu$. An {\em $\ell u$-group}  will be an abelian $\ell$-group which has a strong unit. 
If $(G,u)$ is an $\ell u$-group,  we define
\begin{center}
  $[0,u]=\{x\in G\mid 0\leq x\leq u\}$ and

$x\oplus y=(x+y)\wedge u$,  $ x^*=u-x$,   for any $x, y\in [0,u]$.
\end{center}
Then $[0,u]_G=([0,u],\oplus,^*,0)$ is an MV-algebra.

We denote by $\mathcal{MV}$  the category of MV-algebras and by $\mathcal{AG}_u$  the category of unital abelian lattice-ordered groups with unit-preserving $\ell$-morphisms.
  In \cite{Mundici-1986} the functor $\Gamma\colon \mathcal{AG}_u\to \mathcal{MV}$   is defined as follows:
\begin{center}  
$\Gamma(G,u)=[0,u]_G$, for any unital $\ell$-group $(G,u)$, 

$\Gamma(f)=f|_{[0,u]}$, for any $\ell$-morphism $f:(G,u)\to (G^{\prime},u^\prime)$ from $\mathcal{AG}_u$.
\end{center}

\begin{theorem}\label{t:mundici}\cite{Mundici-1986}
 The functor $\Gamma$ establishes a categorical equivalence between  $\mathcal{AG}_u$ and $\mathcal{MV}$. 
\end{theorem}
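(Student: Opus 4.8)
The cleanest route is not to verify fullness, faithfulness and essential surjectivity of $\Gamma$ separately, but to exhibit an explicit quasi-inverse functor $\Xi\colon\mathcal{MV}\to\mathcal{AG}_u$ and then establish natural isomorphisms $\Gamma\Xi\cong\mathrm{id}_{\mathcal{AG}_u}$ and $\Xi\Gamma\cong\mathrm{id}_{\mathcal{MV}}$; here the functoriality of $\Gamma$ itself is already in hand from the construction of $[0,u]_G$ recalled above. The construction of $\Xi$ I would use is Mundici's method of \emph{good sequences}. Given an MV-algebra $A$, call a sequence $\mathbf{a}=(a_1,a_2,\ldots)$ of elements of $A$ \emph{good} if $a_i\oplus a_{i+1}=a_i$ for every $i\geq 1$ and $a_i=0$ for all but finitely many $i$. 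Intuitively a good sequence records an element $a_1+a_2+\cdots$ of the would-be group together with how carries propagate; the condition $a_i\oplus a_{i+1}=a_i$ is exactly what forces the decomposition to be compatible with group addition via the identity $x+y=(x\oplus y)+(x\odot y)$.

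First I would equip the set $M_A$ of good sequences with the \emph{addition with carrying}
$$(\mathbf{a}+\mathbf{b})_n=\bigoplus_{i+j=n} a_i\odot b_j,$$
using the convention $a_0=b_0=1$, and verify that $M_A$ is a cancellative abelian monoid whose neutral element is the zero sequence. Cancellativity is the first substantial computation, and it is what makes the next step possible: passing to the Grothendieck group $G_A$ of formal differences $\mathbf{a}-\mathbf{b}$, into which $M_A$ embeds. I would then order $G_A$ by declaring $\mathbf{a}-\mathbf{b}\leq\mathbf{c}-\mathbf{d}$ when the carry sums $\mathbf{a}+\mathbf{d}$ and $\mathbf{c}+\mathbf{b}$ are comparable componentwise, and check that this makes $G_A$ an abelian $\ell$-group in which $u_A=(1,0,0,\ldots)$ is a strong unit. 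Setting $\Xi(A)=(G_A,u_A)$ and letting $\Xi$ act componentwise on morphisms then yields the functor.

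It remains to produce the two natural isomorphisms. The direction $\Gamma\Xi\cong\mathrm{id}$, i.e. $[0,u_A]_{G_A}\cong A$, is the easy one: the map $a\mapsto(a,0,0,\ldots)$ identifies $A$ with the good sequences below $u_A$, and one checks that it transports $\oplus$ and $^*$ to the MV-operations of $A$. The genuinely hard part, and the main obstacle, is the direction $\Xi\Gamma\cong\mathrm{id}$: for an arbitrary $\ell u$-group $(G,u)$ one must show that $G$ is recovered from the good sequences of its unit interval $[0,u]_G$. This rests on a \emph{unique good decomposition} result — every $g\in G$ with $0\leq g\leq nu$ can be written uniquely as $g=a_1+\cdots+a_n$ with $(a_1,\ldots,a_n)$ a good sequence in $[0,u]_G$, obtained by iterating $a_1=g\wedge u$, $a_2=(g-a_1)\wedge u$, and so on — which in turn uses the decomposition and interpolation properties of $\ell$-groups together with the strong unit to bound the components and terminate the process. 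Once this bijection between $G^+$ and good sequences is in place, verifying that it is a unit-preserving $\ell$-isomorphism, and that both families of isomorphisms are natural in their arguments, is a matter of careful but routine checking.
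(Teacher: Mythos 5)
The paper states this theorem purely as a citation to Mundici's 1986 article and gives no proof of its own, and your sketch is a faithful outline of Mundici's original argument: good sequences, the cancellative monoid with carry addition, its Grothendieck group with the unit $(1,0,0,\ldots)$, and the unique good decomposition $a_1=g\wedge u$, $a_2=(g-a_1)\wedge u,\ldots$ of positive elements, so the approach is correct and is exactly the one the cited source uses. Two small slips to repair: in your opening paragraph the subscripts on the natural isomorphisms are swapped (since $\Gamma\colon\mathcal{AG}_u\to\mathcal{MV}$ and $\Xi\colon\mathcal{MV}\to\mathcal{AG}_u$, one needs $\Gamma\Xi\cong\mathrm{id}_{\mathcal{MV}}$ and $\Xi\Gamma\cong\mathrm{id}_{\mathcal{AG}_u}$, which is what your later paragraphs actually verify), and the translation-invariant order on $G_A$ must be defined by the one-sided condition $\mathbf{a}+\mathbf{d}\leq\mathbf{c}+\mathbf{b}$ componentwise rather than by the two carry sums being merely ``comparable'', which would not yield an antisymmetric partial order.
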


The standard MV-algebra is $[0,1]=\Gamma({\mathbb R},1)$. 

\begin{theorem}\cite{chang-58}
An equation holds in $[0,1]$ if and only if it holds in any MV-algebra.
\end{theorem}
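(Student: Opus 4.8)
The forward implication is immediate, since $[0,1]$ is itself an MV-algebra and hence validates every equation that holds throughout the variety. All the content is in the converse, which says precisely that $[0,1]$ generates the variety of all MV-algebras, i.e. $\mathbf{HSP}([0,1])=\mathcal{MV}$. The plan is to peel the problem down to totally ordered MV-algebras and then transport those, via Mundici's functor $\Gamma$, into ordered groups where an ultrapower of the reals is available.

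First I would invoke the subdirect representation of MV-algebras: every MV-algebra is a subdirect product of MV-chains. This rests on the facts that the quotient $A/P$ by a prime ideal $P$ is totally ordered and that $\bigcap\{P\mid P\text{ prime}\}=\{0\}$, so an equation can fail in some MV-algebra only if it already fails in some totally ordered one. It therefore suffices to show that every equation $\sigma=\tau$ holding in $[0,1]$ holds in each MV-chain $C$.

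Next I would transport the chain to the group side. A putative failure $\sigma(\bar a)\neq\tau(\bar a)$ uses only finitely many elements $a_1,\dots,a_n$, so I may replace $C$ by the MV-subalgebra they generate; by Theorem \ref{t:mundici} this is $\Gamma(G,u)$ for a finitely generated totally ordered abelian $\ell u$-group $(G,u)$. Being finitely generated, $(G,u)$ is torsion-free of finite rank and embeds, as an ordered group with $u\mapsto 1$, into a sufficiently saturated ultrapower $\mathbb{R}^{*}$ of $\mathbb{R}$; for instance Chang's algebra $\Gamma(\mathbb{Z}\times_{\mathrm{lex}}\mathbb{Z},(1,0))$ is accommodated by sending $(1,0)\mapsto 1$ and $(0,1)$ to a positive infinitesimal. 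Applying $\Gamma$ then yields an MV-embedding of $C$ into $[0,1]^{*}=\Gamma(\mathbb{R}^{*},1)$.

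The delicate point, and the one I expect to be the main obstacle, is exactly this embedding step: non-archimedean chains such as Chang's algebra do not embed into $[0,1]$ itself, so H\"older's theorem is not enough and one genuinely needs the extra room provided by the ultrapower. Once the embedding is in place the argument closes quickly: by {\L}o\'{s}'s theorem $[0,1]^{*}$ satisfies precisely the equations of $[0,1]$, so $\sigma=\tau$ holds in $[0,1]^{*}$ and hence in its subalgebra $C$, equations being inherited by subalgebras. An alternative to the ultrapower route is Weinberg's theorem that the variety of abelian $\ell$-groups is generated by $\mathbb{Z}$ (equivalently $\mathbb{R}$), from which the conclusion can be drawn after taking due care of the distinguished unit; but the ultrapower argument seems the most transparent.
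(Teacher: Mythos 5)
The paper does not actually prove this statement: it is quoted as Chang's completeness theorem with only the citation to \cite{chang-58}, so there is no in-paper argument to compare yours against. Taken on its own terms, your sketch is a correct rendition of a standard proof (essentially the one underlying Theorem \ref{th-dinola}): the reduction to chains via subdirect representation over prime ideals is sound, the reduction to finitely generated chains is sound because a failed instance of an equation survives passage to the subalgebra generated by the witnesses, and the closing step via \L o\'s's theorem together with inheritance of equations by subalgebras is sound. You correctly flag the one genuinely nontrivial step, namely embedding a totally ordered unital abelian group $(G,u)$ into $({}^*{\mathbb R},1)$; to make it airtight you should say why it works: pass to the divisible hull of $G$, which is a totally ordered ${\mathbb Q}$-vector space and hence a model of the complete theory of nontrivial ordered divisible abelian groups, observe that this hull is countable while a nonprincipal ultrapower of ${\mathbb R}$ over $\omega$ is $\aleph_1$-saturated, so an elementary embedding exists, and normalize $u\mapsto 1$ by scaling in the field ${}^*{\mathbb R}$; one then checks that a unit-preserving order embedding of groups restricts to an MV-embedding of the unit intervals. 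Two minor points: finite generation of $(G,u)$ as a group deserves a word (via Mundici's good sequences over the finitely many witnesses), although your argument really only needs countability; and the alternative route through Weinberg's theorem that ${\mathbb Z}$ generates the variety of abelian $\ell$-groups does require the ``due care with the unit'' you mention, since the strong unit is not part of the $\ell$-group signature and equational generation does not transfer through $\Gamma$ for free.
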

As a consequence, the variety of MV-algebras is generated by $[0,1]$.

\begin{theorem}\cite{DiNola-1991}\label{th-dinola}
Any MV-algebra $A$  is isomorphic with an algebra of $^*[0,1]$-valued functions, where $^*[0,1]$ is the unit interval of the 
lattice-ordered group of nonstandard reals $^*{\mathbb R}$.
\end{theorem}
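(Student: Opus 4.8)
The plan is to reduce the statement to two facts: every MV-algebra is a subdirect product of totally ordered MV-algebras (MV-chains), and every MV-chain embeds into the single MV-algebra ${}^*[0,1]$. Granting these, if $A$ embeds as a subdirect product into $\prod_{i\in I}C_i$ with each $C_i$ an MV-chain, and each $C_i$ embeds into ${}^*[0,1]$, then the composite is an injective MV-homomorphism $A\hookrightarrow({}^*[0,1])^{I}$. Its image is a subalgebra of the algebra of all functions $I\to{}^*[0,1]$ with pointwise operations, which is exactly an algebra of ${}^*[0,1]$-valued functions; so the theorem follows.

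For the first fact I would use the ideal theory of MV-algebras. An ideal $P$ is prime precisely when the quotient $A/P$ is totally ordered, and the intersection of all prime ideals of $A$ is $\{0\}$: given $a\neq 0$, one extends by Zorn's lemma a suitable ideal missing $a$ to a prime ideal $P$ with $a\notin P$. Consequently the canonical map $A\to\prod_{P\ \mathrm{prime}}A/P$ is injective and realises $A$ as a subdirect product of the chains $A/P$. This part is routine.

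The substantive step is the embedding of an arbitrary MV-chain $C$ into ${}^*[0,1]$. Using the equivalence $\Gamma$ of Theorem \ref{t:mundici}, write $C=\Gamma(G,u)$ for a totally ordered abelian $\ell$-group $(G,u)$ with strong unit; an MV-embedding $C\hookrightarrow{}^*[0,1]=\Gamma({}^*\mathbb{R},1)$ is then the same as a unit-preserving embedding of ordered groups $(G,u)\to({}^*\mathbb{R},1)$, since between chains any order embedding automatically respects $\wedge=\min$ and $\vee=\max$. I would first pass to the divisible hull $G_d$, a totally ordered $\mathbb{Q}$-vector space still carrying the strong unit $u$, and then embed $(G_d,+,<)$ into $({}^*\mathbb{R},+,<)$ as ordered groups. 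Such an embedding exists because ${}^*\mathbb{R}$, being a sufficiently saturated elementary extension of $\mathbb{R}$, is a universal model of the complete theory of nontrivial divisible ordered abelian groups, and hence receives every such group of bounded cardinality. Finally, since ${}^*\mathbb{R}$ is a field, composing with the scaling automorphism $x\mapsto x/\phi(u)$ of $({}^*\mathbb{R},+,<)$ normalises the image of $u$ to $1$; restricting to $[0,u]$ then yields the MV-embedding $C\hookrightarrow{}^*[0,1]$.

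The main obstacle is uniformity: the theorem demands a single ${}^*\mathbb{R}$ into which all the chains $A/P$ embed simultaneously. I would handle this by fixing at the outset a nonstandard model ${}^*\mathbb{R}$ whose saturation degree exceeds $|A|$ --- for instance a suitable ultrapower of $\mathbb{R}$ --- so that every quotient $A/P$, of cardinality at most $|A|$, embeds into it by the universality argument above. The only points needing care are that the divisible hull of a totally ordered group is again totally ordered and retains $u$ as a strong unit, and that the universality statement is applied purely in the language of ordered abelian groups, the unit being dealt with afterwards by rescaling rather than as a constant of the language. With these in place, composing the subdirect embedding with the chain embeddings delivers the desired representation.
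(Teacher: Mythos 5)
The paper does not prove this statement at all: it is imported verbatim from the cited reference \cite{DiNola-1991}, so there is no in-paper argument to compare yours against. Judged on its own, your proof is correct and is essentially the standard route to Di Nola's representation theorem: Chang's subdirect decomposition of $A$ over its prime ideals into MV-chains, followed by an embedding of each chain $A/P=\Gamma(G,u)$ into $^*[0,1]$ obtained by passing to the divisible hull $G_d$, using that the theory of nontrivial divisible ordered abelian groups is complete (so a sufficiently saturated $^*{\mathbb R}$ is universal for such groups of bounded cardinality), and rescaling by $\varphi(u)^{-1}$ to normalise the unit. All the delicate points are handled or at least explicitly flagged: that order embeddings between chains are automatically $\ell$-embeddings, that $u$ survives as a strong unit in $G_d$, and that a single $^*{\mathbb R}$ must serve all quotients $A/P$ simultaneously, which your cardinality bound $|A/P|\leq |A|$ takes care of. The only input you lean on without justification is the existence of an ultrapower of ${\mathbb R}$ that is $\kappa$-saturated for $\kappa$ exceeding $|A|$; this is true (Keisler--Kunen good ultrafilters) but is a nontrivial theorem and should be cited, or else circumvented by taking $^*{\mathbb R}$ to be any $|A|^+$-saturated elementary extension of ${\mathbb R}$ rather than insisting on an ultrapower.
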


If $A$ is an MV-algebra, $a\in A$ and $n\geq 0$ is a natural number, we define 
  \begin{center}
$0a\stackrel{def}{=}0$ and $na \stackrel{def}{=}(n-1)a\oplus a$,  if  $n>0$. 
\end{center}

\begin{definition}\begin{rm}
If $\iota:A\to B$ is an MV-embedding then we say that:
\begin{list}
{(\arabic{cont})}{\usecounter{cont}\setlength{\leftmargin}{0.5cm}}
	\item $\iota$ is {\em order dense} if for any $b>0$  in $B$, there exists $a>0$ in $A$ such that $\iota(a)\leq b$,
	\item $\iota$ is {\em essential} if for any $b>0$  in $B$, there exists $a>0$ in $A$ such that $\iota(a)\leq nb$, for some natural number  $n\geq 0$.
\end{list}
\end{rm}\end{definition}

For any MV-algebra $A$, a nonempty set $I \subseteq A$ is an {\em MV-ideal} if the following hold:
\begin{list}
{(I\arabic{cont})}{\usecounter{cont}\setlength{\leftmargin}{1cm}}
	\item $a \leq b$ and $b\in I$ implies $a\in I$,
	\item $a,b\in I$ implies $a\oplus b\in I$.
\end{list}

\begin{remark}\begin{rm}
 An embedding $\iota:A\to B$ is essential if and only if for any ideal $I$ of $B$, $I\neq \{0\}$ implies 
$I\cap \iota(A)\neq \{0\}$.
\end{rm}\end{remark}

\begin{lemma}\label{es}
Let $\iota:A\to B$  be an essential embedding. If $C$ is an MV-algebra and $f_A:A\to C$, $f_B:B\to C$ are MV-homomorphisms such that $f_B\circ\iota=f_A$  and  $f_A$ is an embedding then  $f_B$ is an embedding. 
\end{lemma}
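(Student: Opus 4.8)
The plan is to work with kernels and reduce the injectivity of $f_B$ to a statement about ideals of $B$, so that the essentiality of $\iota$ can be invoked through the Remark. Recall that for any MV-homomorphism $g\colon B\to C$ the set $\ker(g)=\{b\in B\mid g(b)=0\}$ is an MV-ideal of $B$, and $g$ is an embedding if and only if $\ker(g)=\{0\}$: two elements have the same image exactly when their MV-distance $(x\odot y^*)\oplus(y\odot x^*)$ lies in the kernel, which follows from the ideal–congruence correspondence for MV-algebras. Hence it suffices to prove that $\ker(f_B)=\{0\}$.

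I would argue by contradiction. Suppose $I:=\ker(f_B)\neq\{0\}$. Since $\iota$ is essential, by the Remark the ideal $I$ meets the image of $\iota$ nontrivially, i.e. $I\cap\iota(A)\neq\{0\}$. Thus there is some $a\in A$ with $\iota(a)\neq 0$ and $\iota(a)\in\ker(f_B)$; because $\iota$ is injective, $\iota(a)\neq 0$ gives $a\neq 0$.

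Now I would use the compatibility condition $f_B\circ\iota=f_A$. From $\iota(a)\in\ker(f_B)$ we obtain $f_A(a)=f_B(\iota(a))=0=f_A(0)$. Since $f_A$ is an embedding, its injectivity forces $a=0$, contradicting $a\neq 0$. Therefore $\ker(f_B)=\{0\}$, and consequently $f_B$ is an embedding.

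Because the argument is this direct, there is no substantial obstacle; the only point requiring care is to have the two standard facts available — that the kernel of an MV-homomorphism is an ideal and that triviality of the kernel is equivalent to injectivity. The real work is already packaged in the Remark, which isolates precisely the consequence of essentiality (every nonzero ideal of $B$ meets $\iota(A)$) that the proof consumes.
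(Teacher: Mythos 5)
Your proof is correct and follows essentially the same route as the paper's: both derive a contradiction by using essentiality to produce a nonzero $a\in A$ with $f_A(a)=f_B(\iota(a))=0$ and then invoke the injectivity of $f_A$. The only cosmetic difference is that you pass through the ideal reformulation of essentiality from the Remark (so you need that $\ker(f_B)$ is an ideal with the kernel-triviality criterion), whereas the paper applies the definition directly, choosing $a>0$ with $\iota(a)\le nb$ and observing that $f_B(nb)=0$ forces $f_A(a)=0$.
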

\begin{proof} Assume $b\in B$ such that $f_B(b)=0$. If $b\neq 0$ there is $a>0$ in $A$ such that $a\leq nb$, so $f_A(a)=f_B(\iota (a))=0$. 
Since $f_A$ is an embedding we infer that $a=0$, which is a contradiction, so $b=0$ and $f_B$ is an embedding. 
\end{proof}

An ideal $I $ of $A$ is {\em proper} if $I\neq A$. A {\em maximal ideal} is a  maximal element of the set of proper ideals ordered by inclusion.  We denote by $Max(A)$ the set of all maximal ideals of $A$. Remember that, for any MV-algebra $A$,  $Max(A)$ endowed with the spectral topology is a compact and Hausdorff space \cite{cignoli-dottaviano-mundici}.

An MV-algebra is {\em semisimple} if  $\bigcap\{I\mid I\in Max(A)\}=\{0\}$. 

Recall that an $\ell u$-group $(G,u)$ is {\em archimedean} if, for any $x$, $y\in G$, we have
\begin{center}
 $nx\leq y$, for any $n\in {\mathbb N}$, implies $x\leq 0$.
\end{center}

\begin{remark}\begin{rm}\cite{cignoli-dottaviano-mundici}
Let $A$ be  an MV-algebra and $(G,u)$ an $\ell u$-group such that $A\simeq\Gamma(G,u)$. Then $A$ is semisimple if and only if $G$ is archimedean. 
\end{rm}\end{remark}

The semisimple MV-algebras are the algebras of $[0,1]$-valued functions, i.e. for any semisimple MV-algebra $A$ there exists  a set $X$ such that $A$ is isomorphic with a subalgebra of  $[0,1]^X$ \cite{Belluce-1986}. If $X$ is a topological space,  we set 
$C(X)=\{f:X\to [0,1]\mid f \mbox{ continuous}\}$,
which obviously is a semisimple MV-algebra. 

\begin{theorem}\cite{cignoli-dottaviano-mundici} Any semisimple MV-algebra $A$ is isomorphic with a separating subalgebra of  $C(Max(A))$. 
\end{theorem}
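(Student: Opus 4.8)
The plan is to realize $A$ concretely as a family of functions on $Max(A)$ via evaluation at maximal ideals. For each $M\in Max(A)$ the quotient $A/M$ is simple, hence isomorphic to a subalgebra of $[0,1]$; moreover such an embedding is unique, so there is a well-defined MV-homomorphism $e_M\colon A\to[0,1]$ obtained by composing the canonical projection $A\to A/M$ with the unique embedding $A/M\hookrightarrow[0,1]$. First I would set $\hat a(M):=e_M(a)$ and define $\Phi\colon A\to[0,1]^{Max(A)}$ by $\Phi(a)=\hat a$. Since each $e_M$ is an MV-homomorphism and the operations on $[0,1]^{Max(A)}$ are computed pointwise, $\Phi$ is an MV-homomorphism.

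Next I would check that $\Phi$ is injective and that its image separates points. Injectivity is exactly where semisimplicity enters: the kernel ideal $\{a\mid \hat a=0\}$ equals $\{a\mid a\in M \text{ for all } M\}=\bigcap Max(A)=\{0\}$, so $\Phi$ is an embedding. For separation, if $M\neq N$ are distinct maximal ideals then, by maximality, neither contains the other, so one may pick $a\in M\setminus N$; then $\hat a(M)=0$ while $\hat a(N)>0$ (as $a\notin N$ and $e_N$ has kernel $N$), so $\hat a$ separates $M$ and $N$.

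The main work, and the step I expect to be the real obstacle, is to show that every $\hat a$ is continuous for the spectral (hull–kernel) topology, so that $\Phi(A)\subseteq C(Max(A))$. Recall that a subbasis for this topology is given by the sets $D(b)=\{M\in Max(A)\mid b\notin M\}$ for $b\in A$, and that by construction $D(b)=\{M\mid \hat b(M)>0\}$. The key observation is that $\Phi$, being a homomorphism, commutes with term functions: for any one-variable MV-term $\tau$ and any $M$ one has $\widehat{\tau^A(a)}(M)=\tau^{[0,1]}(\hat a(M))$. I would then invoke the standard fact that for each rational $q\in(0,1)$ there exist one-variable MV-terms $\tau_q^{+}$ and $\tau_q^{-}$ whose induced functions on $[0,1]$ satisfy $\tau_q^{+}(t)>0\iff t>q$ and $\tau_q^{-}(t)>0\iff t<q$ (the definable truncations, i.e. McNaughton functions; see \cite{cignoli-dottaviano-mundici}). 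Consequently $\{M\mid \hat a(M)>q\}=D(\tau_q^{+,A}(a))$ and $\{M\mid \hat a(M)<q\}=D(\tau_q^{-,A}(a))$ are open.

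Since the intervals $(q,1]$ and $[0,q)$ with $q\in\mathbb{Q}\cap(0,1)$ form a subbasis for the topology of $[0,1]$, the previous step shows that the preimage under $\hat a$ of every subbasic open set is open, i.e. $\hat a\in C(Max(A))$. Putting everything together, $\Phi$ is an MV-embedding of $A$ onto a separating subalgebra of $C(Max(A))$, as required. The only genuinely delicate point is the continuity argument; the subtlety is that the integer multiples $na$ alone cannot detect an arbitrary threshold $q$ (everything is truncated in $[0,1]$), which is precisely why one must pass to general term-definable truncations rather than to the elements $na$.
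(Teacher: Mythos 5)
This theorem is quoted in the paper without proof (it is cited from \cite{cignoli-dottaviano-mundici}), so there is no in-paper argument to compare against; your proof is the standard one from that reference and it is correct. In particular, the evaluation maps $e_M$ via the unique embedding $A/M\hookrightarrow[0,1]$, injectivity from $\bigcap Max(A)=\{0\}$, separation from $M\not\subseteq N$, and continuity via term-definable rational thresholds $\min(1,\max(0,nt-m))$ (McNaughton functions) all go through exactly as you describe.
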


For a semisimple MV-algebra $A$ we denote by $\sem{A}$ the subalgebra of $C(Max(A))$  such that $A\simeq \sem{A}$ and by $\varphi_A:A\to \sem{A}$ the corresponding isomorphism.

\begin{definition}\begin{rm}
An MV-algebra $A$ is {\em divisible} if for any element  $a\in A$ and $n>1$ in ${\mathbb N}$ there exists $x\in A$ such that $nx=a$ and $(n-1)x\leq x^*$.\end{rm}\end{definition}
 
We refer to \cite{Gerla-2001} for a systematic investigation of the divisible MV-algebras and their logic. 

\begin{remark}\cite{Gerla-2001}\begin{rm}
An $\ell$-group $G$ is {\em divisible} if for any element $g\in G$ and any $n>1$ in ${\mathbb N}$ there exists $x\in G$ such that $nx=g$. One can easily see that  an $\ell u$-group 
$(G,u)$ is divisible if and only if the MV-algebra  $[0,u]_G$ is divisible.  
\end{rm}\end{remark}

If $X$ is a compact Hausdorff space then $C(X,{\mathbb R})=\{f:X\to {\mathbb R}\mid f \mbox{ continuous}\}$   is an $\ell$-group and the constant function ${\mathbf 1}$ is a  strong unit.

\begin{remark}\begin{rm}\label{doidiv}
It is well-known that  any MV-algebra can be embedded in a divisible one (see, for example, \cite{DiL-chapter}). 
We provide the details of this embedding for the semisimple case, which is relevant for our paper.

 Assume $(G,{\mathbf 1})$ is an $\ell u$-subgroup of $(C(X, {\mathbb R}), {\mathbf 1})$ and  $A=[0,{\mathbf 1}]_G\subseteq C(X)$.  We define
 $G_d=\{\frac{g}{n}\mid g\in G, n\in {\mathbb N}, n\neq 0\}$ and $A_d=[0,{\mathbf 1}]_{G_d}$. Hence $G_d$ is a divisible $\ell$-group and 
$A_d$ is a divisible MV-algebra.  
Let $g\in G$ and  $n\in {\mathbb N}$ such that $\frac{g}{n}\in A_d$. It follows that $0\leq g \leq  n{\mathbf 1}$ in $G$ , so there are 
$a_1$, $\cdots$, $a_{n}\in A$ such that $g=a_1+\cdots +a_{n}$. Hence $\frac{g}{n}=\frac{a_1}{n}+\cdots +\frac{a_{n}}{n}$. In consequence
\begin{center}
$A_d=\{a\in C(X)\mid a=\frac{a_1}{n}+\cdots +\frac{a_{n}}{n} \mbox{ for some } n\in {\mathbb N}, n\neq 0  \mbox{ and } a_1,\ldots, a_{n}\in A\}$,
\end{center}
and it is straightforward that $A\subseteq A_d$.
\end{rm}\end{remark}

If $X$ is a compact Hausdorff space and  $\sem{A}\leq C(X)$ is a semisimple MV-algebra then we get an embedding  
$\iota_{A,d}:\sem{A}\to\semd{A}$.

We note that

\begin{lemma}\label{divh}
Under the above hypothesis, the following properties hold.
\begin{list}
{(\alph{cont})}{\usecounter{cont}\setlength{\leftmargin}{0.5cm}}
\item The embedding ${\iota_{A,d}}$ is essential.
\item If $U$ is a semisimple divisible MV-algebra and $f:\sem{A}\to U$ is an MV-homomorphism then there exists a unique MV-homomorphism  $f_d:\semd{A}\to U$ such that $f_d(a)\circ\iota_{A,d}=f$. Moreover, if $f$ is an embedding then $f_d$ is also an embedding. 
\end{list}
\end{lemma}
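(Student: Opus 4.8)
The plan is to prove (a) concretely inside $C(X,{\mathbb R})$, and to obtain (b) by lifting $f$ to the enveloping $\ell$-groups, where division causes no trouble.

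For (a), fix $b>0$ in $\semd{A}$. By the functional description of $\semd{A}$ recalled above there are $n\geq 1$ and $a_1,\dots,a_n\in\sem{A}$ with $b=\frac{a_1+\cdots+a_n}{n}$; set $g=a_1+\cdots+a_n\in G$, so that $g\geq 0$ and $g=nb$ computed in $G_d$. A standard computation shows that the $n$-fold MV-multiple of $b$ equals $g\wedge{\mathbf 1}$ (for $0\leq x\leq{\mathbf 1}$ the MV-multiple $kx$ coincides with $(kx)\wedge{\mathbf 1}$ computed in the group), and $g\wedge{\mathbf 1}\in[0,{\mathbf 1}]_G=\sem{A}$. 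Since the order of $G\leq C(X,{\mathbb R})$ is pointwise and $b\neq 0$ forces $g(x_0)>0$ for some $x_0$, we get $(g\wedge{\mathbf 1})(x_0)>0$, hence $a:=g\wedge{\mathbf 1}$ is a strictly positive element of $\sem{A}$ with $a\leq nb$. This is exactly what essentiality requires, so (a) follows.

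For (b), I would first move to groups. By Theorem \ref{t:mundici} write $U=\Gamma(H,v)$; since $U$ is semisimple and divisible, $H$ is archimedean and divisible, hence torsion-free and uniquely divisible, and $f$ lifts to a unique unital $\ell$-morphism $\ol{f}\colon(G,{\mathbf 1})\to(H,v)$ with $\ol{f}|_{[0,{\mathbf 1}]}=f$. I then extend $\ol f$ to $G_d$ by $\ol{f}_d(g/n)=\tfrac{1}{n}\ol f(g)$, the fraction being formed in $H$, and set $f_d=\Gamma(\ol f_d)$; equivalently, writing $b=\bigoplus_{i=1}^n c_i$ with $c_i$ the MV-division of $a_i$ by $n$, one has $f_d(b)=\bigoplus_{i=1}^n y_i$, where $y_i\in U$ is the MV-division of $f(a_i)$ by $n$. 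Because $\ol f_d$ extends $\ol f$ we get $f_d\circ\iota_{A,d}=f$ at once.

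The crux is the well-definedness of $\ol f_d$ together with the fact that it is an $\ell$-morphism. Well-definedness is where unique divisibility enters: if $g/n=g'/n'$ then $n'g=ng'$ in $G$, so $n'\ol f(g)=n\ol f(g')$ in $H$, and dividing (legitimate since $H$ is torsion-free divisible) yields $\tfrac1n\ol f(g)=\tfrac1{n'}\ol f(g')$. That $\ol f_d$ is a group morphism is then immediate, and that it preserves $\wedge$ and $\vee$ follows by clearing denominators together with the fact that multiplication by $\tfrac1n$ is a lattice automorphism of a uniquely divisible $\ell$-group; it clearly fixes the unit. Hence $f_d$ is a well-defined MV-homomorphism. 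For uniqueness, note that every $b\in\semd{A}$ is a finite MV-sum of elements $c$ with $nc\in\sem{A}$ and $(n-1)c\leq c^*$; any homomorphism extending $f$ must send such a $c$ to a solution $y$ of $ny=f(nc)$ with $(n-1)y\leq y^*$, and such a $y$ is unique in $U$ (again because $H$ is uniquely divisible), so the extension is forced. Finally, the embedding statement needs no new work: if $f$ is an embedding then, since $\iota_{A,d}$ is essential by part (a) and $f_d\circ\iota_{A,d}=f$, Lemma \ref{es} applied with $f_A=f$ and $f_B=f_d$ shows that $f_d$ is an embedding. I expect the only genuinely delicate point to be the well-definedness and $\ell$-morphism property of $\ol f_d$; everything else is either routine MV-computation or a direct appeal to Theorem \ref{t:mundici} and Lemma \ref{es}.
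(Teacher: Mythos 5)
Your proof is correct and follows essentially the same route as the paper: part (a) is exactly the "easy consequence" of the functional description of $\semd{A}$ from Remark \ref{doidiv} that the paper invokes (producing $a=g\wedge{\mathbf 1}\in\sem{A}$ with $0<a\leq nb$), and part (b) is the paper's argument of lifting to the $\ell u$-groups, extending by $h^{\#}(g/n)=h(g)/n$, transporting back via $\Gamma$, and deducing the embedding claim from (a) together with Lemma \ref{es}. You merely supply the routine verifications (well-definedness, the $\ell$-morphism property, uniqueness) that the paper leaves implicit.
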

\begin{proof}
(a) follows easily from the description of $\semd{A}$ from  Remark \ref{doidiv}.\\
(b) Assume that $G$ and $G_d$ are the $\ell u$-groups from Remark \ref{doidiv}. One can easily see that 
whenever $(H,v)$ is a divisible $\ell u$-group and $h:G\to H$ is an $\ell u$-morphism there exists a unique $\ell u$-morphism
$h^\#: G_d\to H$ extending $h$, which is simply defined by $h^\#(\frac{g}{n})=\frac{h(g)}{n}$ for any $g\in G$ and $n\in {\mathbb N}$. 
Hence the extension result for MV-algebras follows using the  functor $\Gamma$. The embeddings are preserved by (a) and Lemma \ref{es}.
\end{proof}

Recall that an MV-algebra $A$ is {\em complete} if any subset $\{a_i\mid i\in I\}$  of $A$ has infimum and supremum. 

\begin{definition}\begin{rm}
For a semisimple MV-algebra $A$ we say that $\hat{A}$ is  {\em  the Dedekind-MacNeille completion} of $A$ if  $A\leq \hat{A}$, $\hat{A}$ is complete and for any  element $\hat{a}\in \hat{A}$ there exists  a family $\{a_i\mid i\in I\}\subseteq A$ such that 
$\hat{a}=\bigvee\{a_i\mid i\in I\}$.   
\end{rm}\end{definition}

Since any complete MV-algebra is semisimple \cite[Proposition 6.6.2]{cignoli-dottaviano-mundici}, only semisimple MV-algebras admit completions.   

\begin{remark}\begin{rm}
Any semisimple MV-algebra $A$ has a Dedekind-MacNeille completion $\hat{A}$, which is unique up to isomorphism.  Moreover, 
$A$ is order dense in $\hat{A}$. 
\end{rm}\end{remark}

We refer to \cite{Ball-GG-Leustean} for a  study of  completions in the theory of  MV-algebras, with a special focus on the Dedekind-MacNeille completion.  

\section{Riesz MV-algebras}\label{doi}

Any MV-algebra is isomorphic  with the unit interval of an $\ell u$-group. If we consider a Riesz space  with strong unit instead of an $\ell u$-group, then the unit interval is closed under the scalar multiplication with scalars from $[0,1]$.  The structures obtained in this way are studied in \cite{dinola-leustean-2013}.

\begin{definition}\begin{rm}\cite{dinola-leustean-2013}
A {\em Riesz MV-algebra} is a structure $(V,\cdot,\oplus,^*,0)$, where $(V,\oplus,^*,0)$ is an MV-algebra and $\cdot : [0,1] \times V \rightarrow V$ is a function such that:
\begin{list}
{(RMV\arabic{cont})}{\usecounter{cont}\setlength{\leftmargin}{0.5cm}}
	\item $r \cdot (a\odot b^*)=(r\cdot a)\odot (r\cdot b)^*$,
	\item $(\max (r-q,0))\cdot a=(r\cdot a)\odot (q\cdot a)^*$,
	\item $(r \cdot q) \cdot a = r \cdot (q \cdot a)$,
	\item $1 \cdot a = a$,
\end{list}
for any $r,q \in [0,1]$ and $a,b\in V$.
\end{rm}\end{definition}

In order to simplify the notation, we shall frequently write $ra$ instead of $r \cdot a$, for any $r \in [0,1]$ and $a\in V$.
For a Riesz MV-algebra $(V,\cdot,\oplus,^*,0)$ we denote by ${\mathbf U}(V)=(V,\oplus,^*,0)$ its {\em MV-algebra reduct}.

\begin{remark}\cite{dinola-leustean-2013}
{\normalfont  If $V$ is a Riesz MV-algebra and $I\subseteq {\mathbf U}(V)$ is an MV-algebra ideal, then $r\cdot a\in I$ for any $r\in [0,1]$ and $a\in I$. Hence a Riesz MV-algebra has the same theory of ideals as its MV-algebra reduct. In consequence, a Riesz MV-algebra is semisimple if and only if its MV-algebra reduct is semisimple. } 
\end{remark}

\begin{proposition}\cite{dinola-leustean-2013} If $V_1$ and $V_2$ are Riesz MV-algebra and $f : {\mathbf U}(V_1) \rightarrow {\mathbf U}(V_2)$ is an MV-homomorphism, then $f(ra) = rf(a)$, for any $r\in [0,1]$ and $a\in V_1$.
\end{proposition}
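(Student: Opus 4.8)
The plan is to show that the scalar action is already determined by the MV-reduct, treating rational scalars first (where the argument is completely general) and then passing to arbitrary real scalars by an archimedean squeezing argument.

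First I would record three elementary consequences of RMV1--RMV4. Putting $r=q$ in RMV2 and using $x\odot x^*=0$ gives $0\cdot a=0$. Next, for $r\le q$ in $[0,1]$, RMV2 gives $(r\cdot a)\odot(q\cdot a)^*=(\max(r-q,0))\cdot a=0\cdot a=0$, so $r\cdot a\le q\cdot a$; thus scalar multiplication is isotone in the scalar. Finally, for $r+q\le 1$, applying RMV2 to the pair $(r+q,r)$ yields $q\cdot a=((r+q)\cdot a)\odot(r\cdot a)^*$, and the MV-identity $x\oplus(y\odot x^*)=x\vee y$ together with the isotonicity $r\cdot a\le(r+q)\cdot a$ gives $(r\cdot a)\oplus(q\cdot a)=(r+q)\cdot a$. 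By induction this yields $(kr)\cdot a=k(r\cdot a)$ whenever $kr\le 1$, where $k(\cdot)$ denotes the MV integer multiple.

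Second, I would settle the rational scalars. Fix $n\ge 1$ and set $x=\frac{1}{n}\cdot a$. The additivity above gives $n\cdot x=(n\cdot\frac{1}{n})\cdot a=1\cdot a=a$ by RMV4, while RMV2 applied to $(1,\frac1n)$ gives $(n-1)\cdot x=\frac{n-1}{n}\cdot a=a\odot x^*\le x^*$. Hence $x$ is an $n$-divisor of $a$ in the sense of the divisibility definition, and such divisors are unique in every MV-algebra. Since any MV-homomorphism preserves $\oplus$, $^*$, $\odot$, $\le$ and integer multiples, the element $f(x)$ satisfies $n\cdot f(x)=f(a)$ and $(n-1)\cdot f(x)\le f(x)^*$ in $V_2$; by uniqueness of divisors $f(x)=\frac{1}{n}\cdot f(a)$. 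Combined with the preservation of integer multiples this gives $f(q\cdot a)=q\cdot f(a)$ for every rational $q\in[0,1]$.

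Third, and this is the main obstacle, I would pass from rational to real scalars. Given $r\in[0,1]$ and rationals $p\le r\le q$, isotonicity and the rational case give $p\cdot f(a)\le f(r\cdot a)\le q\cdot f(a)$ and $p\cdot f(a)\le r\cdot f(a)\le q\cdot f(a)$. Since $x\odot y^*$ is isotone in $x$ and antitone in $y$, I obtain $f(r\cdot a)\odot(r\cdot f(a))^*\le(q\cdot f(a))\odot(p\cdot f(a))^*=(q-p)\cdot f(a)$, and choosing $q-p=\frac{1}{m}$ gives $f(r\cdot a)\odot(r\cdot f(a))^*\le\frac{1}{m}\cdot f(a)$ for every $m$. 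The crux is to conclude that this element is $0$, which requires the archimedean property $\bigwedge_m\frac{1}{m}\cdot b=0$. I would secure it by realizing $V_2$ concretely as a separating subalgebra of $C(Max(V_2))$ via the representation of semisimple MV-algebras \cite{Belluce-1986}: there $\frac{1}{m}\cdot f(a)$ tends pointwise to $0$, so the inequality forces $f(r\cdot a)\odot(r\cdot f(a))^*=0$, i.e. $f(r\cdot a)\le r\cdot f(a)$; the symmetric estimate yields the reverse inequality, hence $f(r\cdot a)=r\cdot f(a)$. The delicate point is precisely this appeal to the archimedean/continuity property, because the purely equational axioms RMV1--RMV4 pin down the scalar action only on the rationals, and the passage to the reals must be driven by the order together with the concrete function representation.
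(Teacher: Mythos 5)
The paper does not actually prove this proposition; it is imported from \cite{dinola-leustean-2013}, so there is no internal argument to measure yours against, and I can only assess the proposal on its own terms. Your first two stages are correct: the listed consequences of (RMV1)--(RMV4) do hold, $\frac{1}{n}\cdot a$ is the $n$-divisor of $a$ in the sense of the paper's definition of divisibility, and uniqueness of $n$-divisors (which you assert but should justify, e.g.\ by passing to the enveloping $\ell u$-group, where it reduces to torsion-freeness of abelian $\ell$-groups) yields $f(q\cdot a)=q\cdot f(a)$ for rational $q$. The squeeze in the third stage is also set up correctly, but it closes more cleanly, and with less risk of circularity, than via the function representation: from $c:=f(r\cdot a)\odot(r\cdot f(a))^*\leq\frac{1}{m}\cdot f(a)$ and $(m-1)\cdot\bigl(\frac{1}{m}\cdot f(a)\bigr)=f(a)\odot\bigl(\frac{1}{m}\cdot f(a)\bigr)^*\leq\bigl(\frac{1}{m}\cdot f(a)\bigr)^*$ one gets $(m-1)c\leq c^*$ for every $m$, so $c$ is an infinitesimal of $V_2$ and vanishes when $V_2$ is semisimple \cite{cignoli-dottaviano-mundici}. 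Your appeal to the representation inside $C(Max(V_2))$ tacitly assumes that the represented scalar multiplication is the pointwise one, which is uncomfortably close to the statement being proved.

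The genuine gap is the one you flag yourself: the proposition as quoted carries no semisimplicity hypothesis, while your third stage indispensably uses one on $V_2$, and this cannot be argued away. The axioms (RMV1)--(RMV4) do not determine the action of irrational scalars on a non-semisimple algebra: let $d:\mathbb{R}\to\mathbb{R}$ be a nonzero derivation over $\mathbb{Q}$ and, on the lexicographic plane $\mathbb{R}\times_{\mathrm{lex}}\mathbb{R}$ with strong unit $u=(1,0)$, replace the coordinatewise scalar multiplication by $r\ast(a,b)=(ra,\,rb+d(r)a)$. This is again a Riesz space structure on the same underlying $\ell$-group, so both unit intervals are Riesz MV-algebras with the same MV-reduct, yet $r\ast u=(r,d(r))\neq(r,0)$ whenever $d(r)\neq 0$, so the identity MV-isomorphism does not intertwine the two scalar actions. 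Consequently no argument along your lines (nor, relative to the definitions as given in this paper, any argument at all) establishes the unrestricted statement; what you have is a correct proof for semisimple $V_2$, which happens to be the only case the present paper ever invokes through Remark \ref{morf}, since all Riesz MV-algebras appearing here are semisimple.
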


\begin{remark}\cite{dinola-leustean-2013}\label{morf}
{\normalfont
By the previous proposition, it follows that {\em Riesz MV-algebra homomorphisms} are just MV-homomorphisms between Riesz MV-algebras, so we shall only state that a function is an MV-homomorphism, even if the domain and the codomain are Riesz MV-algebras.
}
\end{remark}

We recall that a {\em Riesz space} ({\em vector-lattice}) \cite{Zaanen-1971} is a structure 
$(L,\cdot, +,0,\leq)$
such that $(V,+,0,\leq)$ is an abelian $\ell$-group, $(V,\cdot,+,0)$ is a real vector space and, in addition,
 $x\leq y$ implies $r \cdot x\leq r\cdot y$,
for any $x$, $y\in L$ and $r\in{\mathbb R}$, $r\geq 0$. A Riesz space is {\em unital} if the underlaying  $\ell$-group  is {unital}.
If $(L,u)$ is a Riesz space with strong unit, then we denote by $\Gamma_R(L,u)=([0,u],\cdot,\oplus,^*,0)$, where 
$\cdot$ is the scalar multiplication restricted to scalars from $[0,1]$.

\begin{remark}\cite{dinola-leustean-2013} 
{\normalfont
For any unital Riesz space $(L,u)$, the structure 
$\Gamma_R(L,u)$ is a Riesz MV-algebra.
}
\end{remark}

In this way we can define a functor $\Gamma_R : \mathcal{RS}_u\to \mathcal{RMV}$, where  
$\mathcal{RS}_u$  is the category of unital Riesz spaces and $\mathcal{RMV}$ is the category of Riesz MV-algebras.  The categorial equivalence from Theorem \ref{t:mundici} leads to the following one.  

\begin{theorem}\cite{dinola-leustean-2013}
 The functor $\Gamma_R$ establishes a categorical equivalence.  
\end{theorem}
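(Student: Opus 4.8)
The plan is to build a quasi-inverse to $\Gamma_R$ by leaning on the Mundici equivalence $\Gamma$ of Theorem \ref{t:mundici}. The organizing observation is that the square relating the two $\Gamma$-functors commutes: writing $\mathbf{U}:\mathcal{RMV}\to\mathcal{MV}$ for the MV-reduct functor and $F:\mathcal{RS}_u\to\mathcal{AG}_u$ for the functor that forgets the scalar multiplication, we have $\mathbf{U}\circ\Gamma_R=\Gamma\circ F$. Faithfulness of $\Gamma_R$ is then immediate, since a morphism of unital Riesz spaces is in particular an $\ell u$-morphism and is therefore determined by its restriction to $[0,u]$, and this restriction is injective because $\Gamma$ is faithful. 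Fullness reduces to a single extension check: given Riesz MV-algebras $\Gamma_R(L_1,u_1)$ and $\Gamma_R(L_2,u_2)$ and an MV-homomorphism $f$ between them, Theorem \ref{t:mundici} supplies a unique $\ell u$-morphism $\tilde f:L_1\to L_2$ with $\Gamma(\tilde f)=f$, and one must only verify that $\tilde f$ preserves scalar multiplication. This follows from the explicit formula for the scalar action on each $L_i$ (described next) together with the fact that $f$ already commutes with scalars on the unit intervals, as recorded in the proposition preceding Remark \ref{morf}.

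The substantive part is essential surjectivity. Starting from a Riesz MV-algebra $V$, I take the $\ell u$-group $(G,u)$ with $\Gamma(G,u)\simeq\mathbf{U}(V)$ guaranteed by Theorem \ref{t:mundici}, and equip $G$ with a scalar multiplication extending the given one on $[0,u]\simeq V$. The extension is essentially forced: since $u$ is a strong unit, every $g\geq 0$ in $G$ can be written as $g=a_1+\cdots+a_n$ with $a_i\in[0,u]$, and for $r\in[0,1]$ I set $r\cdot g=r\cdot a_1+\cdots+r\cdot a_n$. I then push this to an arbitrary positive real via $r=m+s$ with $m\in\mathbb{N}$ and $s\in[0,1)$, setting $r\cdot g=mg+s\cdot g$; to negative scalars via $r\cdot g=-((-r)\cdot g)$; and to an arbitrary $g$ via $g=g^{+}-g^{-}$.

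The main obstacle, and where most of the work lies, is showing that this scalar multiplication is well defined and satisfies the Riesz space axioms. Well-definedness amounts to proving that two decompositions $a_1+\cdots+a_n=b_1+\cdots+b_m$ of the same positive element give the same value of $r\cdot g$; I would reduce this, using the arithmetic of the Mundici construction, to the additivity of $r\cdot(-)$ over sums that stay within $[0,u]$, i.e.\ $r\cdot(a\oplus b)=(r\cdot a)\oplus(r\cdot b)$ whenever $a\odot b=0$, which holds in $[0,1]$ and hence in every Riesz MV-algebra. Granting well-definedness, the vector space axioms (associativity and the two distributive laws) and the monotonicity $x\leq y\Rightarrow r\cdot x\leq r\cdot y$ for $r\geq 0$ follow by a case analysis that repeatedly reduces statements about $G$ to statements about $[0,u]$, where they hold by (RMV1)--(RMV4). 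Uniqueness of the Riesz space structure extending that of $V$ then yields $\Gamma_R(G,u)\simeq V$ and makes $V\mapsto(G,u)$ functorial. Morphisms cause no further difficulty, since by Remark \ref{morf} Riesz MV-homomorphisms are exactly MV-homomorphisms, and the preservation of scalars by the induced $\ell u$-morphisms is precisely the fullness check already described.
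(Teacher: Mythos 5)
This theorem is quoted from \cite{dinola-leustean-2013}; the paper under review gives no proof of it, only the remark that it ``follows'' from the Mundici equivalence of Theorem \ref{t:mundici}. Your outline is in fact the standard argument from the cited source: transfer faithfulness and fullness along $\Gamma$, and obtain essential surjectivity by extending the scalar multiplication from the unit interval $[0,u]\simeq V$ to the whole $\ell u$-group $(G,u)$ via decompositions $g=a_1+\cdots+a_n$ with $a_i\in[0,u]$ (well-definedness being handled by the Riesz decomposition property, which reduces everything to additivity of $r\cdot(-)$ on sums that remain in $[0,u]$). So the route is the expected one and the skeleton is sound.

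One step deserves more care than you give it. You justify the key additivity law, $r\cdot(a\oplus b)=(r\cdot a)\oplus(r\cdot b)$ whenever $a\odot b=0$, by saying it ``holds in $[0,1]$ and hence in every Riesz MV-algebra.'' That inference is only automatic for identities; this is a quasi-identity, and the fact that $[0,1]$ generates the \emph{variety} of Riesz MV-algebras does not by itself transfer quasi-identities to all members (the paper explicitly recalls that for PMV-algebras $[0,1]$ generates only a proper quasi-variety, so the distinction matters). The statement is nevertheless provable directly from the axioms: if $a\odot b=0$ then $a=(a\oplus b)\odot b^*$, so (RMV1) gives $r\cdot a=(r\cdot(a\oplus b))\odot(r\cdot b)^*$, whence $(r\cdot a)\oplus(r\cdot b)=(r\cdot(a\oplus b))\vee(r\cdot b)=r\cdot(a\oplus b)$, using monotonicity of the scalar action. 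With that repair, and with the routine but nontrivial verifications of well-definedness and of the vector-space axioms carried out, your sketch matches the proof in \cite{dinola-leustean-2013}.
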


The standard Riesz MV-algebra is  $([0,1],\cdot, \oplus, ^*, 0)$ where $([0,1], \oplus, ^*, 0)$ is the standard MV-algebra and $\cdot$ is the product of real numbers.

\begin{theorem}\cite{dinola-leustean-2013} 
The variety of Riesz MV-algebras is generated by $[0,1]$.
\end{theorem}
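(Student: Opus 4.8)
The plan is to deduce this completeness-type statement from Birkhoff's description of the variety generated by an algebra, combined with the nonstandard representation of Theorem~\ref{th-dinola}. By Birkhoff's theorem the variety generated by $[0,1]$ is $\mathbf{HSP}([0,1])$, and since $[0,1]$ is itself a Riesz MV-algebra, only the reverse inclusion needs proof: every Riesz MV-algebra must satisfy every Riesz MV-equation valid in $[0,1]$. Writing $\mathrm{Id}([0,1])$ for the set of such equations, the goal is therefore to show $V\models\mathrm{Id}([0,1])$ for an arbitrary Riesz MV-algebra $V$.

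The first step is to pin down one Riesz MV-algebra that both models $\mathrm{Id}([0,1])$ and absorbs every $V$ as a subalgebra of a power. The candidate is ${}^*[0,1]$, the unit interval of the nonstandard reals ${}^*\mathbb{R}$. I would first verify that ${}^*[0,1]$ really is a Riesz MV-algebra: ${}^*\mathbb{R}$ is an ordered field extension of $\mathbb{R}$, hence a totally ordered real vector space, so multiplication by reals in $[0,1]$ restricts to a map $[0,1]\times{}^*[0,1]\to{}^*[0,1]$, and axioms (RMV1)--(RMV4), being ordered-field identities in the interval elements with real scalar constants, hold exactly as over $\mathbb{R}$. By the same token each equation of $\mathrm{Id}([0,1])$ rewrites as a universal sentence of the ordered-field language with finitely many standard real constants; since it holds in $\mathbb{R}$, the transfer principle yields that it holds in ${}^*\mathbb{R}$, so ${}^*[0,1]\models\mathrm{Id}([0,1])$, and hence so does every power ${}^*[0,1]^X$.

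The second step transports an arbitrary $V$ into such a power. Applying Theorem~\ref{th-dinola} to the MV-reduct $\mathbf{U}(V)$ gives an MV-embedding $\mathbf{U}(V)\hookrightarrow{}^*[0,1]^X$ for some set $X$. Since both the source and the target are Riesz MV-algebras, the target carrying the coordinatewise scalar multiplication, the proposition stating that any MV-homomorphism between Riesz MV-algebras automatically preserves scalar multiplication upgrades this to a Riesz MV-embedding $V\hookrightarrow{}^*[0,1]^X$. As equations are preserved under subalgebras and products, from ${}^*[0,1]^X\models\mathrm{Id}([0,1])$ we obtain $V\models\mathrm{Id}([0,1])$, which is exactly what was required.

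The bookkeeping is routine; the one delicate point is the transfer step, where one must check that the passage from a Riesz MV-equation valid in $[0,1]$ to a statement valid in ${}^*[0,1]$ is legitimate, i.e.\ that the finitely many scalars occurring are standard reals and that the lattice operations are expressed inside the ordered-field language, so that the transfer principle applies verbatim. An alternative route avoids nonstandard analysis: by Birkhoff it suffices to place every free Riesz MV-algebra in $\mathbf{SP}([0,1])$; every simple Riesz MV-algebra is isomorphic to $[0,1]$, because its MV-reduct embeds in $[0,1]$ and closure under all scalars from $[0,1]$ forces equality, so the problem reduces to the semisimplicity of free Riesz MV-algebras, equivalently, through $\Gamma_R$, to the archimedean property of free vector lattices. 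In that approach establishing the archimedean property of the free object is the main obstacle.
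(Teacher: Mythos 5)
First, a point of reference: the paper does not prove this theorem at all --- it is quoted from \cite{dinola-leustean-2013} --- so there is no in-paper argument to measure yours against. Taken on its own terms, your strategy (embed every Riesz MV-algebra into a power of ${}^*[0,1]$ and pull the identities of $[0,1]$ back along the embedding) is in the spirit of the known proofs, and the transfer step is sound: each Riesz MV-identity involves only finitely many standard scalars and rewrites as a universal ordered-field sentence with real parameters, so it passes from $\mathbb{R}$ to ${}^*\mathbb{R}$, and identities persist under powers and subalgebras.

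The genuine gap is the step that upgrades Di Nola's MV-embedding $\mathbf{U}(V)\hookrightarrow({}^*[0,1])^X$ to a Riesz MV-embedding by invoking automatic preservation of scalars by MV-homomorphisms. You apply this to an arbitrary, possibly non-semisimple $V$, and that is precisely where it is not safe. Rational multiples are determined by the MV-reduct (the $n$-th part of an element is unique by torsion-freeness of the enveloping group), so they are preserved; but for irrational $r$ the element $r\cdot a$ is only squeezed between the rational multiples $qa$ with $q<r<q'$, and in a non-archimedean algebra this does not pin it down. Concretely, on $\Gamma_R(\mathbb{R}\times_{\mathrm{lex}}\mathbb{R},(1,0))$ one can twist the scalar multiplication by a derivation $\nu$ of $\mathbb{R}$, setting $r\cdot(a,b)=(ra,\,rb+\nu(r)a)$; this satisfies (RMV1)--(RMV4) and yields a second Riesz MV-structure on the same MV-reduct, so the identity map is an MV-isomorphism that does not commute with the scalars. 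This is the same phenomenon that forces the paper to assume semisimplicity in Corollary \ref{c5} and forces Conrad to restrict to archimedean $\ell$-groups. Hence your argument really only covers semisimple $V$ --- and those already embed into powers of $[0,1]$ itself, so the non-semisimple case is the whole content of the theorem; moreover, if the cited preservation proposition is itself established in \cite{dinola-leustean-2013} via the completeness theorem, your use of it is circular. Closing the gap requires the real work that your closing paragraph only gestures at: decompose $V$ subdirectly into linearly ordered Riesz MV-algebras (the congruences are those of the MV-reduct) and embed each linearly ordered one, scalars included, into an ultrapower of $[0,1]$.
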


\begin{lemma}\label{dmh}
The Dedekind-MacNeille completion of a semisimple divisible  MV-algebra $D$  is a Riesz MV-algebra $\hat{D}$ in which $D$ is order dense. 
\end{lemma}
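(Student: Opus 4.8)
The order density of $D$ in $\hat{D}$ is already part of the defining property of the Dedekind-MacNeille completion, so the whole content of the statement is to equip $\hat{D}$ with a scalar multiplication $\cdot:[0,1]\times\hat{D}\to\hat{D}$ turning it into a Riesz MV-algebra. The plan is to extend to $\hat{D}$ the scalar multiplication by rationals that $D$ already carries because it is divisible. Concretely, for $n\ge 1$ and $a\in D$ divisibility provides a (necessarily unique, $D$ being semisimple) element $\frac1n a\in D$ with $n(\frac1n a)=a$ and $(n-1)(\frac1n a)\le(\frac1n a)^*$; setting $\frac mn\,a=m(\frac1n a)$ for $\frac mn\in\mathbb{Q}\cap[0,1]$ yields a well-defined action of $\mathbb{Q}\cap[0,1]$ on $D$. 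It is cleanest to read this off from the group side: writing $D\simeq\Gamma(G,u)$ with $G$ a divisible (hence $\mathbb{Q}$-vector) archimedean $\ell u$-group, the rational action on $D$ is the restriction of the $\mathbb{Q}$-scalar multiplication of $G$, so it satisfies the rational instances of $(\mathrm{RMV}1)$--$(\mathrm{RMV}4)$ with no further computation.

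Next I would extend this action to $\hat{D}$ in two steps, using that $\hat{D}$ is complete and that, by order density, every $\hat a\in\hat{D}$ satisfies $\hat a=\bigvee\{a\in D\mid a\le\hat a\}$. First, for a fixed rational $q\in\mathbb{Q}\cap[0,1]$ I set $q\hat a=\bigvee\{qa\mid a\in D,\ a\le\hat a\}$, the supremum existing because the set is bounded above by $\hat a$; this extends the rational action and is monotone in $\hat a$. Then, for an arbitrary real $r\in[0,1]$, I define
\[
 r\hat a=\bigvee\{q\hat a\mid q\in\mathbb{Q}\cap[0,1],\ q\le r\},
\]
again a legitimate supremum in the complete algebra $\hat{D}$. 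One checks that on $D$, and for rational $r$, these definitions reduce to the original action, so no ambiguity arises.

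The remaining task is to verify $(\mathrm{RMV}1)$--$(\mathrm{RMV}4)$ for this real action, and this is where the real work lies. The strategy is to reduce each identity to its already known rational instance and then pass to the supremum. The tool is the order-continuity of the MV operations in the complete algebra $\hat{D}$: the operation $^*$ sends suprema to infima, while $\oplus$ and $\odot$ commute with the relevant suprema, since $a\oplus b=(a+b)\wedge u$ and the underlying group addition together with $\wedge$ obey the infinite distributive laws valid in a complete MV-algebra. Thus, for example, $(r\hat a)\odot(r\hat b)^*$ can be computed as a supremum of the rational expressions $(qa)\odot(qb)^*=q(a\odot b^*)$, yielding $(\mathrm{RMV}1)$; $(\mathrm{RMV}2)$ and $(\mathrm{RMV}3)$ follow from their rational counterparts in the same way, and $(\mathrm{RMV}4)$ is immediate. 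The main obstacle is precisely to justify these interchanges of the scalar multiplication and the MV operations with the suprema defining $r\hat a$---that is, to establish the order-continuity of $\cdot$ in both arguments---after which all four axioms fall out.

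Alternatively, and perhaps more transparently, one may argue entirely on the group side. Here $\hat{D}=\Gamma(\hat{G},u)$, where $\hat{G}$ is the Dedekind-MacNeille completion of $G$, a Dedekind complete, divisible, archimedean $\ell$-group in which every positive element is a supremum of elements of $G$. On such a group the formula $rx=\bigvee\{qx\mid q\in\mathbb{Q},\ 0<q\le r\}$ for $x\ge 0$ and $r>0$, extended by $rx=r x^{+}-r x^{-}$, defines a real vector-lattice structure---group addition being automatically order-continuous since translations are order isomorphisms---so $\hat{G}$ is a unital Riesz space and $\hat{D}=\Gamma_R(\hat{G},u)$ is a Riesz MV-algebra by construction. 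With either route the order density of $D$ is inherited from the completion, which completes the proof.
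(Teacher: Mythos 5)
Your proposal is correct and follows essentially the same route as the paper: the paper's proof passes to the group side via $\Gamma$ (using that $\Gamma$ preserves divisibility and completeness) and invokes the fact that the Dedekind--MacNeille completion of a divisible abelian $\ell$-group is a Riesz space, while also noting the direct definition $ra=\bigvee\{qa\mid q\in[0,1]\cap\mathbb{Q},\ q\leq r\}$ --- exactly the two alternatives you describe. The order-continuity issues you flag in the direct route are likewise left to the cited literature in the paper, so your level of detail matches the original.
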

\begin{proof}
It is a straightforward consequence of the fact that the  functor $\Gamma$ preserves both divisibility \cite{Gerla-2001} and 
completeness \cite{DiNola-Sessa}. Hence there exists a divisible $\ell u$-group $(G,u)$ such that $D\simeq \Gamma(G,u)$ and 
$\hat{D}=\Gamma(\hat{G},u)$, where $\hat{G}$ is  the Dedekind-MacNeille completion of $G$. Now we use the fact that 
the Dedekind-MacNeille completion of  a divisible abelian $\ell$-group is a Riesz space \cite{Darnel}.  The result can be directly proved by setting  $ra=\bigvee\{qa\mid q\in [0,1]\cap{\mathbb Q}, q\leq r\}$ for any $r\in [0,1]$ and $a\in \hat{D}$.
\end{proof}

\begin{remark}
{\normalfont
By Theorem \ref{th-dinola},  for any MV-algebra $A$ there exists a set $X$ such that $A$ is embedded in the MV-algebra 
$(^*[0,1])^X$.  Since $^*[0,1]$ is obviously a Riesz MV-algebra, one can easily see that   $(^*[0,1])^X$ becomes a Riesz MV-algebra with the scalar multiplication defined componentwise. Hence any MV-algebra can be embedded in a Riesz MV-algebra. 
}
\end{remark}

In the following we prove that,  for a semisimple MV-algebra $A$, we can define a unique (up to isomorphism) Riesz MV-algebra in which $A$ is essentially embedded and we will further analyze the properties of this embedding.

\section{The Riesz MV-algebra hull}\label{shull}

In the sequel, we follow closely the similar construction for archimedean $\ell$-groups from  \cite{Conrad-1971} and \cite{Bleier-1971}, but our proofs are made directly in the context of MV-algebras.

 Due to Remark \ref{morf},  in the rest of this paper we will make no distinction between MV-homomorphisms and Riesz MV-algebra homomorphisms. If $A$ is an MV-algebra and $X$ is a subset of $A$, we shall denote by $\langle X\rangle_{MV}$ the MV-subalgebra generated by $X$ in $A$. Similarly, if $V$ is a Riesz MV-algebra and $X$ is a subset of $V$, we shall denote by $\langle X\rangle_{RMV}$ the Riesz MV-subalgebra generated by $X$ in $V$.

If $A$ is a semisimple MV-algebra, then its divisible hull $A_d$  is also semisimple. If $X=Max(A_d)$ is the compact Hausdorff space of the maximal ideals of $A_d$, then  $$A\simeq \sem{A}\subseteq \semd{A}\subseteq C(X).$$ Let $\semd{\dm{A}}$  be the Dedekind-MacNeille completion of $\semd{A}$. By Lemma \ref{dmh}, $\semd{\dm{A}}$ is a Riesz MV-algebra. We denote by ${\mathbf R}(A)$ the Riesz MV-algebra generated by $\sem{A}$ in  $\semd{\dm{A}}$. 

For a semisimple MV-algebra $A$, we assume the following:

\begin{list}
{}{\usecounter{cont}\setlength{\leftmargin}{0.5cm}}
	\item $\varphi_A:A\to\sem{A}$ is the canonical MV-algebra isomorphism, 
	\item $\iota_{A,d}:\sem{A}\to \semd{A}$ is the  embedding of $\sem{A}$ in its divisible hull $\semd{A}$, 
	\item $\hat{\iota}_{A,d}:\semd{A}\to \semd{\dm{A}}$ is the  embedding of $\semd{A}$ in its Dedekind-MacNeille completion. 
\end{list}
Hence we denote by $\iota_A:A\to {\mathbf R}(A)$ the co-restriction to ${\mathbf R}(A)$ of the homomorphism
 $\hat{\iota}_{A,d}\circ {\iota}_{A,d}\circ\varphi_A$.

\begin{theorem}\label{main}
If $A$ is a semisimple MV-algebra and $\mathbf{R}(A)$ is defined as above, then the following properties hold.
\begin{list}
{(\alph{cont})}{\usecounter{cont}\setlength{\leftmargin}{0.5cm}}
	\item There exists an embedding $\iota_A:A\to \mathbf{R}(A)$ and $\mathbf{R}(A)=\langle\iota_A(A)\rangle_{RMV}$.
	\item The embedding ${\iota_A}$ is essential.
       \item  If  $V$ is a semisimple  Riesz MV-algebra and $f:A\to V$ is an MV-embedding then there exists an MV-embedding 
$f_R:\mathbf{R}(A)\to V$ such that $f_R(\iota_A(a))=f(a)$, for any $a\in A$.

	\begin{center}
     	 \begin{tikzpicture}[
 		 font=\sffamily,
		  every matrix/.style={ampersand replacement=\&,column sep=1cm,row sep=1cm},
		  source/.style={thick,inner sep=.1cm},
		  to/.style={->,>=stealth',shorten >=.1pt,semithick,font=\sffamily\footnotesize}]
  
		 \matrix{
		    \node[source] (A) {$A$}; 
		    \&	
		    \& \node[source] (R) {$\mathbf{R}(A)$};   \\
    
		    \& \node[source] (tensorA) {$V$}; 
  		    \& \\
 		};
		
		\draw [right hook->,>=stealth',shorten >=.1pt,semithick,font=\sffamily\footnotesize]  (A) --node[midway,above] {$\iota_A$} (R) ; 
		\draw [right hook->,>=stealth',shorten >=.1pt,semithick,font=\sffamily\footnotesize]  (A) --node[midway,left] {$f$ $ $} (tensorA) ; 
		\draw [left hook->,>=stealth',shorten >=.1pt,semithick,font=\sffamily\footnotesize,dotted]  (R) --node[midway,right] {$f_R$} (tensorA) ; 
	\end{tikzpicture}
\end{center}
	\end{list}

\end{theorem}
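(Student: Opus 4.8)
The plan is to read off (a) and (b) directly from the construction and to concentrate the real effort on the universal property (c). For (a), by definition $\iota_A$ is the corestriction to $\mathbf{R}(A)$ of $\hat{\iota}_{A,d}\circ\iota_{A,d}\circ\varphi_A$; since $\varphi_A$ is an isomorphism and $\iota_{A,d},\hat{\iota}_{A,d}$ are embeddings, the composite is an MV-embedding, and corestricting to the subalgebra $\mathbf{R}(A)$, which contains its image, keeps it injective. As $\iota_A(A)$ is exactly the copy of $\sem{A}$ sitting inside $\semd{\dm{A}}$ and $\mathbf{R}(A)$ was defined as the Riesz MV-subalgebra generated by $\sem{A}$, the equality $\mathbf{R}(A)=\langle\iota_A(A)\rangle_{RMV}$ holds by construction.

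For (b) I would use two elementary facts: a composite of essential embeddings is again essential (apply the outer homomorphism to the inner inequality $a\le nb$, using that homomorphisms commute with the integer multiples $n(\cdot)$), and an order dense embedding is essential (take $n=1$). By Lemma \ref{divh}(a) the map $\iota_{A,d}$ is essential, while $\hat{\iota}_{A,d}$ is order dense since $\semd{A}$ is order dense in its Dedekind--MacNeille completion; hence their composite $\sem{A}\to\semd{\dm{A}}$ is essential. Essentiality then descends to the intermediate algebra: if $b>0$ in $\mathbf{R}(A)\subseteq\semd{\dm{A}}$, the witness $a>0$ in $\sem{A}$ with $a\le nb$ already furnished in $\semd{\dm{A}}$ certifies essentiality of $\sem{A}\to\mathbf{R}(A)$, and composing with $\varphi_A$ shows that $\iota_A$ is essential.

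The substance is (c). I would first set $g=f\circ\varphi_A^{-1}\colon\sem{A}\to V$, still an MV-embedding. Every Riesz MV-algebra is divisible (the scalar $\tfrac1n a$ witnesses divisibility), so $V$ is semisimple and divisible and Lemma \ref{divh}(b) extends $g$ to an MV-embedding $g_d\colon\semd{A}\to V$ with $g_d\circ\iota_{A,d}=g$. Because $g_d$ is an MV-homomorphism into a Riesz MV-algebra, it automatically preserves all rational scalar multiples, $g_d(qa)=q\,g_d(a)$ for $q\in[0,1]\cap\mathbb{Q}$ and $a\in\semd{A}$ (Remark \ref{morf}). It then remains to extend $g_d$ from the divisible algebra $\semd{A}$ to the Riesz MV-algebra $\mathbf{R}(A)=\langle\semd{A}\rangle_{RMV}$ it generates in $\semd{\dm{A}}$, the only genuinely new operation being scalar multiplication by irrationals; the natural prescription is $f_R(ra)=r\,g_d(a)$, extended over the MV-operations.

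The main obstacle is to see that this prescription is well defined as a Riesz MV-homomorphism into $V$, which need not be complete, so the suprema $ra=\bigvee\{qa\mid q\in[0,1]\cap\mathbb{Q},\,q\le r\}$ from Lemma \ref{dmh} cannot simply be transported along $g_d$. I would resolve this using that $V$ is semisimple, hence Archimedean: in the Archimedean Riesz space underlying $V$ one has the same formula $r\,v=\bigvee\{q\,v\mid q\in[0,1]\cap\mathbb{Q},\,q\le r\}$ for $v\ge 0$, the point being that $n(rv-w)\le v$ whenever $w$ dominates all $qv$ with $q\le r$, which forces $rv\le w$ by the Archimedean property. Since $g_d$ is an order-embedding preserving rational scalars, the comparisons among rational multiples that determine these suprema in $\semd{\dm{A}}$ are reproduced in $V$, and the sup-formula then pins down the value on irrational multiples; this makes the evaluation independent of the chosen Riesz MV-term and yields a Riesz MV-homomorphism $f_R\colon\mathbf{R}(A)\to V$ agreeing with $g_d$ on $\semd{A}$, so that $f_R(\iota_A(a))=f(a)$. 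Injectivity is then automatic: $\semd{A}$ is order dense, hence essentially embedded, in $\mathbf{R}(A)$, and $f_R$ restricts to the embedding $g_d$, so Lemma \ref{es} gives that $f_R$ is an embedding. Uniqueness, although not demanded by the statement, also follows, since a Riesz MV-homomorphism is determined by its values on the generating set $\iota_A(A)$.
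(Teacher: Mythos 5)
Parts (a) and (b) and the first step of (c) are fine and agree with the paper: you extend $f$ to an embedding $g_d\colon\semd{A}\to V$ via Lemma \ref{divh}(b), exactly as in the printed proof. The problem is the second half of (c). You correctly identify the difficulty (extending from $\semd{A}$ to $\mathbf{R}(A)$ when $V$ is not complete), but the resolution you offer does not close it. Your sup-formula argument only fixes the value of $f_R$ on elements of the special form $r\cdot a$ with $a\in\semd{A}$, where the Archimedean identity $rv=\bigvee\{qv\mid q\in[0,1]\cap\mathbb{Q},\ q\leq r\}$ in $V$ indeed forces $f_R(ra)=r\,g_d(a)$. But a general element of $\mathbf{R}(A)=\langle\semd{A}\rangle_{RMV}$ is the value of an arbitrary Riesz MV-term in elements of $\semd{A}$, with scalar multiplications nested inside $\oplus$ and $^*$. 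To define $f_R$ by ``evaluate the same term on the images'' you must show that $t(\bar a)=s(\bar a)$ in $\mathbf{R}(A)$ implies $t(g_d(\bar a))=s(g_d(\bar a))$ in $V$, and that the resulting map is a homomorphism; the phrase ``this makes the evaluation independent of the chosen Riesz MV-term'' asserts precisely this but does not prove it. Nothing in your argument rules out, say, two different representations of $(ra)\oplus(sb)$ giving different values in $V$, nor does it show that the suprema needed for elements that are not single scalar multiples exist in $V$ at all.

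The paper avoids this by never trying to land in $V$ directly: since $V$ is semisimple and divisible, its Dedekind--MacNeille completion $\dm{V}$ is a (complete) Riesz MV-algebra by Lemma \ref{dmh}, and $f_d=g_d$ extends to an embedding $\hat f_d\colon\semd{\dm{A}}\to\dm{V}$ of the completions. Because $\hat f_d$ is then a homomorphism between two honest Riesz MV-algebras, it carries generated subalgebras to generated subalgebras, so
$\hat f_d(\mathbf{R}(A))=\langle\hat f_d(\iota_A(A))\rangle_{RMV}=\langle f(A)\rangle_{RMV}\subseteq V$,
and $f_R$ is simply the co-restriction of $\hat f_d|_{\mathbf{R}(A)}$ to $V$. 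The well-definedness you are struggling with is absorbed into the existence of $\hat f_d$ on a complete codomain, where the scalar multiplication is already globally defined. To repair your proof you would either have to reproduce this detour through $\dm{V}$, or else supply a genuine well-definedness argument (e.g.\ via a normal form for elements of the Riesz MV-algebra generated by a divisible subalgebra), which is substantially more work than the paragraph you wrote. Your closing remarks on injectivity (via essentiality of $\semd{A}\hookrightarrow\mathbf{R}(A)$ and Lemma \ref{es}) and on uniqueness are correct once the existence of $f_R$ is secured.
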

\begin{proof}
${}$

\begin{list}
{}{\usecounter{cont}\setlength{\leftmargin}{0cm}}
\item (a) follows by definition, since $A$ is embedded in $\semd{A}$ and $\semd{A}$ is embedded in  $\semd{\dm{A}}$.  

\item (b) is a straightforward consequence of Lemmas \ref{divh} and \ref{dmh}. 

\item (c) Let $V$ be a semisimple Riesz MV-algebra and $f:A\to V$ an MV-embedding.  Since $V$ is also divisible, by Remark \ref{doidiv}, there is a unique MV-embedding $f_d: \semd{A}\to V$ such that 
$$f_d\circ\iota_{A,d}\circ\varphi_A=f.$$ If $\iota_V:V\to \dm{V}$ is the inclusion of $V$ in its Dedekind-MacNeille completion, then 
there exists a unique MV-embedding $\hat{f}_d:\semd{\dm{A}}\to \dm{V}$ such that 
$$\hat{f}_d\circ\hat{\iota}_{A,d}=\iota_V\circ f_d.$$

\begin{center}
     	 \begin{tikzpicture}[
 		 font=\sffamily,
		  every matrix/.style={ampersand replacement=\&,column sep=1cm,row sep=1cm},
		  source/.style={thick,inner sep=.1cm},
		  to/.style={->,>=stealth',shorten >=.1pt,semithick,font=\sffamily\footnotesize}]
  
		 \matrix{
		    \node[source] (A) {$A$}; 
		    \&	
		    \& \node[source] (R) {$\semd{A}$};   \\
    
		    \& \node[source] (tensorA) {$V$}; 
  		    \& \\
 		};
		
		\draw [right hook->,>=stealth',shorten >=.1pt,semithick,font=\sffamily\footnotesize]  (A) --node[midway,above] {$\iota_{A,d}\circ\varphi_A$} (R) ; 
		\draw [right hook->,>=stealth',shorten >=.1pt,semithick,font=\sffamily\footnotesize]  (A) --node[midway,left] {$f$ $ $} (tensorA) ; 
		\draw [left hook->,>=stealth',shorten >=.1pt,semithick,font=\sffamily\footnotesize,dotted]  (R) --node[midway,right] {$f_d$} (tensorA) ; 

	\end{tikzpicture}
	\hspace{1cm}
     	 \begin{tikzpicture}[
 		 font=\sffamily,
		  every matrix/.style={ampersand replacement=\&,column sep=1cm,row sep=1cm},
		  source/.style={thick,inner sep=.1cm},
		  to/.style={->,>=stealth',shorten >=.1pt,semithick,font=\sffamily\footnotesize}]
  
		 \matrix{
		    \node[source] (A) {$\semd{A}$}; 
		    \&	
		    \& \node[source] (R) {$\semd{\dm{A}}$};   \\
    
		    \& \node[source] (tensorA) {$\dm{V}$}; 
  		    \& \\
 		};
		
		\draw [right hook->,>=stealth',shorten >=.1pt,semithick,font=\sffamily\footnotesize]  (A) --node[midway,above] {$\hat{\iota}_{A,d}$} (R) ; 
		\draw [right hook->,>=stealth',shorten >=.1pt,semithick,font=\sffamily\footnotesize]  (A) --node[midway,left] {$\iota_V\circ f_d$}(tensorA) ; 
		\draw [left hook->,>=stealth',shorten >=.1pt,semithick,font=\sffamily\footnotesize,dotted]  (R) --node[midway,right] {$\hat{f}_d$} (tensorA) ; 
		
	\end{tikzpicture}

\end{center}

It follows that 
\begin{center}
$\hat{f}_d\circ\iota_A=\hat{f}_d\circ\hat{\iota}_{A,d}\circ {\iota}_{A,d}\circ\varphi_A=
\iota_V\circ f_d\circ{\iota}_{A,d}\circ\varphi_A=\iota_V\circ f$,
\end{center}
and we get 
\begin{center}
 $\hat{f}_d({\mathbf R}(A))=\hat{f}_d(\langle \iota_A(A)\rangle_{RMV})=\langle \hat{f}_d(\iota_A(A))\rangle_{RMV}=$

$\langle \iota_V(f(A))\rangle_{RMV}=\langle f(A)\rangle_{RMV}\subseteq V$.
\end{center}

Therefore we define $f_R:{\mathbf R}(A)\to V$ as the co-restriction to $V$ of the restriction $\hat{f}_d|_{{\mathbf R}(A)}$. 
If $g:{\mathbf R}(A)\to V$ is another  MV-embedding such that $g\circ\iota_A=f$, then $g$ and $f$ coincide on the 
generators of ${\mathbf R}(A)$, so they coincide on ${\mathbf R}(A)$. 

\end{list}
\end{proof}

Following \cite{Conrad-1971}, we define the Riesz hull of an MV-algebra.

\begin{definition}\begin{rm}
 We say that a Riesz MV-algebra  $U$  is a {\em Riesz hull} of $A$ if there exists an essential  embedding $\eta:A\to U$ such that $U=\langle \eta(A)\rangle_{RMV}$.
\end{rm}\end{definition}

In consequence, Theorem  \ref{main} asserts that any semisimple MV-algebra has a Riesz hull which is unique, up to isomorphism.

\begin{corollary}\label{c1}
If $A$ is a semisimple MV-algebra, then ${\mathbf R}(A)\simeq{\mathbf R}(\semd{A})$.
\end{corollary}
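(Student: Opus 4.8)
The plan is to recognise ${\mathbf R}(A)$ itself as a Riesz hull of $\semd{A}$ and then read off the isomorphism from the universal property in Theorem \ref{main}(c). By construction all the relevant algebras sit concretely inside the common Riesz MV-algebra $\semd{\dm{A}}\le C(X)$, where $X=Max(A_d)$ is the space used to build ${\mathbf R}(A)$; in particular $\sem{A}\subseteq\semd{A}\subseteq\semd{\dm{A}}$ and ${\mathbf R}(A)=\langle\sem{A}\rangle_{RMV}$. The crucial preliminary point I would establish is that the embedded copy $\hat{\iota}_{A,d}(\semd{A})$ already lies inside ${\mathbf R}(A)$.

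For this I would use the explicit description of $\semd{A}$ from Remark \ref{doidiv}: a typical element has the form $a=\frac{a_1}{n}+\cdots+\frac{a_n}{n}$ with $a_1,\ldots,a_n\in\sem{A}$ and $a\le{\mathbf 1}$ pointwise. Each summand $\frac{a_i}{n}=\frac{1}{n}\cdot a_i$ is a scalar multiple of an element of $\sem{A}$, hence lies in ${\mathbf R}(A)$; and since $a\le{\mathbf 1}$ the truncated sum satisfies $\frac{a_1}{n}\oplus\cdots\oplus\frac{a_n}{n}=\min\{{\mathbf 1},\frac{a_1}{n}+\cdots+\frac{a_n}{n}\}=a$, so $a$ is obtained from elements of ${\mathbf R}(A)$ by the Riesz MV-operations and therefore $a\in{\mathbf R}(A)$. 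This yields an MV-embedding $j\colon\semd{A}\to{\mathbf R}(A)$, namely the co-restriction of $\hat{\iota}_{A,d}$ to ${\mathbf R}(A)$. I expect this containment $\semd{A}\subseteq{\mathbf R}(A)$ to be the only genuinely computational step: it is exactly the point where the divisibility of $\semd{A}$ meets the closure of ${\mathbf R}(A)$ under scalar multiplication and truncated addition.

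Two easy properties of $j$ then complete the set-up. Since $\iota_A=j\circ\iota_{A,d}\circ\varphi_A$, the image $j(\semd{A})$ contains $\iota_A(A)$, whence $\langle j(\semd{A})\rangle_{RMV}=\langle\iota_A(A)\rangle_{RMV}={\mathbf R}(A)$, so $j$ is a generating embedding. (Essentiality of $j$ also follows, via Theorem \ref{main}(b), from the fact that $\iota_A$ factors through $j$, so ${\mathbf R}(A)$ is in fact a Riesz hull of $\semd{A}$; but only the generation property is needed below.) I would also note that ${\mathbf R}(A)$ is a \emph{semisimple} Riesz MV-algebra, being a subalgebra of the complete — hence semisimple — algebra $\semd{\dm{A}}$.

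Finally I would apply Theorem \ref{main}(c) to the semisimple MV-algebra $\semd{A}$, the semisimple Riesz MV-algebra $V={\mathbf R}(A)$ and the embedding $f=j$, obtaining an MV-embedding $j_R\colon{\mathbf R}(\semd{A})\to{\mathbf R}(A)$ with $j_R\circ\iota_{\semd{A}}=j$, where $\iota_{\semd{A}}$ is the canonical embedding of $\semd{A}$ into its Riesz hull. Surjectivity of $j_R$ is then immediate: since $j_R$ is a Riesz MV-homomorphism and $\iota_{\semd{A}}(\semd{A})$ generates ${\mathbf R}(\semd{A})$,
\[ j_R({\mathbf R}(\semd{A}))=\langle j_R(\iota_{\semd{A}}(\semd{A}))\rangle_{RMV}=\langle j(\semd{A})\rangle_{RMV}={\mathbf R}(A). \]
Thus $j_R$ is a surjective embedding, i.e. an isomorphism, and ${\mathbf R}(A)\simeq{\mathbf R}(\semd{A})$.
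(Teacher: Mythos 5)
Your proof is correct and follows the same route the paper intends: the paper dismisses Corollary \ref{c1} with ``straightforward consequence of the construction'', and the content behind that remark is precisely your computation that $\semd{A}\subseteq{\mathbf R}(A)$ (each $\frac{a_1}{n}+\cdots+\frac{a_n}{n}\leq\mathbf 1$ being a truncated sum of scalar multiples of elements of $\sem{A}$), so that $\langle\sem{A}\rangle_{RMV}=\langle\semd{A}\rangle_{RMV}$ inside $\semd{\dm{A}}$. Your final appeal to Theorem \ref{main}(c) to extract the isomorphism is a harmless formalization of the direct identification and uses nothing that appears only later in the paper.
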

\begin{proof}
It is a straightforward consequence of the construction.
\end{proof}

\begin{corollary}\label{c2}
If $A$ is a semisimple MV-algebra and $V$ is a semisimple Riesz MV-algebra such that 
$A\subseteq V$ and $V=\langle A\rangle_{RMV}$, then $V\simeq {\mathbf R}(A)$. 
 \end{corollary}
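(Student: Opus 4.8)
The plan is to deduce Corollary \ref{c2} directly from the universal property established in Theorem \ref{main}(c), using the uniqueness of the Riesz hull. The hypotheses give a concrete semisimple Riesz MV-algebra $V$ together with the inclusion $A\subseteq V$, which we view as an MV-embedding $f:A\to V$; since $V=\langle A\rangle_{RMV}$, this $f$ is an embedding whose image generates $V$ as a Riesz MV-algebra. The goal is to produce an isomorphism between $V$ and $\mathbf{R}(A)$ compatible with the embeddings of $A$.

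First I would apply Theorem \ref{main}(c) to the embedding $f:A\to V$. Since $V$ is a semisimple Riesz MV-algebra, the theorem yields an MV-embedding $f_R:\mathbf{R}(A)\to V$ with $f_R\circ\iota_A=f$. It remains to check that $f_R$ is surjective. This is where the generation hypothesis enters: the image $f_R(\mathbf{R}(A))$ is a Riesz MV-subalgebra of $V$ containing $f_R(\iota_A(A))=f(A)=A$, and by Theorem \ref{main}(a) we have $\mathbf{R}(A)=\langle\iota_A(A)\rangle_{RMV}$, so $f_R(\mathbf{R}(A))=\langle f(A)\rangle_{RMV}=\langle A\rangle_{RMV}=V$. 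Thus $f_R$ is an embedding that is onto, hence an isomorphism, giving $V\simeq\mathbf{R}(A)$.

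I expect no serious obstacle here, since the statement is essentially a repackaging of the universal property together with the defining generation property of $\mathbf{R}(A)$; this is precisely why Theorem \ref{main} was set up with clause (a) asserting that $\iota_A(A)$ generates $\mathbf{R}(A)$. The only point requiring a moment of care is the surjectivity computation: one must use that a homomorphism carries the generated subalgebra onto the subalgebra generated by the image, i.e. $f_R(\langle\iota_A(A)\rangle_{RMV})=\langle f_R(\iota_A(A))\rangle_{RMV}$, which is the same manipulation already performed in the proof of Theorem \ref{main}(c) when computing $\hat{f}_d(\mathbf{R}(A))$. Consequently the corollary follows, as claimed, ``by a straightforward consequence of the construction''.
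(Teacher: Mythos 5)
Your argument is exactly the paper's proof: apply Theorem \ref{main}(c) to the inclusion $A\hookrightarrow V$ to get an embedding $f_R:\mathbf{R}(A)\to V$ over $A$, then use $f_R(\langle\iota_A(A)\rangle_{RMV})=\langle f_R(\iota_A(A))\rangle_{RMV}=\langle A\rangle_{RMV}=V$ to conclude surjectivity. The proposal is correct and matches the paper's approach in every step.
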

\begin{proof}
By Theorem \ref{main} (c), there exists an MV-embedding $e:{\mathbf R}(A)\to V$ such that $e(\iota_A(a))=a$ for any $a\in A$. Hence 
\begin{center} $e({\mathbf R}(A))=e(\langle \iota_A(A)\rangle_{RMV})=\langle e(\iota_A(A))\rangle_{RMV}=\langle A\rangle_{RMV}= V$, 
\end{center}
so $e$ is an isomorphism. 
\end{proof}

\begin{corollary}\label{c3}
Let $A$ be a semisimple MV-algebra and $V$ be a semisimple Riesz MV-algebra such that $A\subseteq V$ and $\langle A\rangle_{RMV}=V$. Then 
the embedding $A\hookrightarrow V$ is essential. If, in addition, $A$ is divisible, then the embedding $A\hookrightarrow V$ is order dense. 
\end{corollary}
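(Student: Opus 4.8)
The plan is to transport the known properties of the canonical embedding $\iota_A\colon A\to\mathbf{R}(A)$ to the given pair $A\subseteq V$ by means of the isomorphism supplied by Corollary \ref{c2}. Since $A\subseteq V$, $V$ is a semisimple Riesz MV-algebra and $V=\langle A\rangle_{RMV}$, Corollary \ref{c2} yields an isomorphism $e\colon\mathbf{R}(A)\to V$; this $e$ is precisely the map $f_R$ produced by Theorem \ref{main}(c) for the inclusion $f\colon A\hookrightarrow V$, so it satisfies $e(\iota_A(a))=a$ for every $a\in A$. In particular the inclusion $j\colon A\hookrightarrow V$ factors as $j=e\circ\iota_A$, and everything reduces to verifying that $e$ preserves the relevant density notions.

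For the first claim I would use Theorem \ref{main}(b), which states that $\iota_A$ is essential, together with the observation that an isomorphism carries essential embeddings to essential embeddings. Explicitly, given $c>0$ in $V$, the element $e^{-1}(c)$ is positive in $\mathbf{R}(A)$, so essentiality of $\iota_A$ provides $a>0$ in $A$ and $n\in\mathbb{N}$ with $\iota_A(a)\le n\,e^{-1}(c)$; applying the order- and operation-preserving map $e$ gives $j(a)=a\le nc$, which is exactly essentiality of $j$.

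For the second claim I would first note that divisibility of $A$ trivializes the divisible-hull step. Through the functor $\Gamma$ we have $A\simeq\Gamma(G,u)$ with $G$ divisible, whence $G_d=G$ and therefore $\semd{A}=\sem{A}$. Consequently $\mathbf{R}(A)$ is the Riesz MV-subalgebra generated by $\sem{A}$ inside the Dedekind-MacNeille completion $\semd{\dm{A}}$, and we have the chain $\sem{A}\subseteq\mathbf{R}(A)\subseteq\semd{\dm{A}}$. By Lemma \ref{dmh}, $\sem{A}$ is order dense in $\semd{\dm{A}}$, and this density clearly restricts to the intermediate subalgebra $\mathbf{R}(A)$: for $b>0$ in $\mathbf{R}(A)$, viewing $b$ in $\semd{\dm{A}}$ furnishes $a>0$ in $\sem{A}\subseteq\mathbf{R}(A)$ with $a\le b$. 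Thus $\iota_A$ is order dense, and transporting along $e$ exactly as before—given $c>0$ in $V$, choose $a>0$ with $\iota_A(a)\le e^{-1}(c)$ and apply $e$—shows that $j$ is order dense.

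The whole argument is bookkeeping around the categorical equivalence and the completion machinery, so I do not anticipate a genuine obstacle. The two points demanding the most care are recording that Corollary \ref{c2} delivers an isomorphism compatible with $\iota_A$ (so that the inclusion actually equals $e\circ\iota_A$, not merely that $V\simeq\mathbf{R}(A)$), and confirming that for divisible $A$ the divisible hull collapses, placing $\mathbf{R}(A)$ between $\sem{A}$ and $\semd{\dm{A}}$.
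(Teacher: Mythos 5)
Your proposal is correct and follows essentially the same route as the paper: the first claim is obtained from Corollary \ref{c2} together with Theorem \ref{main}(b), and the second from the collapse $\sem{A}=\semd{A}$ for divisible $A$, the chain $\semd{A}\subseteq\mathbf{R}(A)\subseteq\semd{\dm{A}}$, and Lemma \ref{dmh}. You merely spell out the transport along the isomorphism $e$ and the restriction of order density to the intermediate subalgebra, details the paper leaves implicit.
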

\begin{proof}
The first part follows by Corollary \ref{c2} and Theorem \ref{main} (b). If $A$ is divisible, then $A\simeq \semd{A}$. In this case, the conclusion follows by the fact that $\semd{A}\subseteq{\mathbf R}(A)\subseteq\semd{\dm{A}}$ and Lemma \ref{dmh}.
\end{proof}

\begin{corollary}\label{c4}
If $V$ is a semisimple Riesz MV-algebra, then $V\simeq {\mathbf R}(V)$. In this case, $\iota_V$ is an isomorphism. 
\end{corollary}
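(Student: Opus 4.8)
The plan is to obtain both statements as immediate consequences of Corollary \ref{c2} and Theorem \ref{main}(a), exploiting the fact that $V$ is already closed under scalar multiplication. First I would regard $V$ as a semisimple MV-algebra via its reduct $\mathbf{U}(V)$; its semisimplicity is guaranteed by the earlier remark that a Riesz MV-algebra is semisimple if and only if its MV-algebra reduct is. Then I would apply Corollary \ref{c2} with the semisimple MV-algebra taken to be $\mathbf{U}(V)$ and the ambient Riesz MV-algebra taken to be $V$ itself: the inclusion $\mathbf{U}(V)\subseteq V$ trivially satisfies $V=\langle \mathbf{U}(V)\rangle_{RMV}$, since $V$ generates itself. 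Corollary \ref{c2} then yields at once $V\simeq \mathbf{R}(\mathbf{U}(V))=\mathbf{R}(V)$.

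For the second assertion I would show that the canonical map $\iota_V\colon V\to \mathbf{R}(V)$ is surjective. By Theorem \ref{main}(a) it is an embedding and $\mathbf{R}(V)=\langle \iota_V(V)\rangle_{RMV}$. The key observation is that $\iota_V$ is an MV-homomorphism between two Riesz MV-algebras, so by Remark \ref{morf} it automatically preserves scalar multiplication; hence $\iota_V(V)$ is already a Riesz MV-subalgebra of $\mathbf{R}(V)$, and therefore $\langle \iota_V(V)\rangle_{RMV}=\iota_V(V)$. Combining this with $\mathbf{R}(V)=\langle \iota_V(V)\rangle_{RMV}$ forces $\mathbf{R}(V)=\iota_V(V)$, so $\iota_V$ is onto. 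An embedding that is surjective is an isomorphism, whence $\iota_V$ is an isomorphism.

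I do not expect a real obstacle, because this corollary is the degenerate instance of the hull construction in which the input already lies in the target variety, so its hull must be itself. The only point that will require a moment's care is bookkeeping: the operator $\mathbf{R}$ is defined on semisimple MV-algebras, so one has to pass explicitly to the reduct $\mathbf{U}(V)$ and then use the reduct-semisimplicity remark together with Remark \ref{morf} to move the scalar structure back and forth without friction.
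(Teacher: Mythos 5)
Your proposal is correct and follows essentially the same route as the paper, whose entire proof is the observation that Corollary \ref{c2} applies with $A=\mathbf{U}(V)$ and ambient algebra $V$ itself. Your extra argument for the surjectivity of $\iota_V$ (via Remark \ref{morf}, the image $\iota_V(V)$ is already a Riesz MV-subalgebra, so it equals $\langle\iota_V(V)\rangle_{RMV}=\mathbf{R}(V)$) is a sound and slightly more explicit way of extracting the second assertion, which the paper leaves implicit in the fact that the isomorphism $e$ produced by Corollary \ref{c2} satisfies $e\circ\iota_V=\mathrm{id}_V$.
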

\begin{proof} It follows from Corollary \ref{c2}.
\end{proof}

\begin{corollary}\label{c5}
Assume $V_1$ and $V_2$ are semisimple Riesz MV-algebras with the same MV-algebra reduct. Then $V_1\simeq V_2$.
\end{corollary}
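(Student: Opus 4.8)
The plan is to reduce the statement to Corollary \ref{c4} by observing that the hull construction $A\mapsto \mathbf{R}(A)$ depends only on the \emph{MV-algebra reduct} of its argument and makes no reference whatsoever to a chosen scalar multiplication. Interpreting the hypothesis ``same MV-algebra reduct'' as $\mathbf{U}(V_1)=\mathbf{U}(V_2)$, I would set $A=\mathbf{U}(V_1)=\mathbf{U}(V_2)$, which is a semisimple MV-algebra by assumption, and then argue that $\mathbf{R}(V_1)$ and $\mathbf{R}(V_2)$ are not merely isomorphic but literally the \emph{same} Riesz MV-algebra $\mathbf{R}(A)$.

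The core of the argument is to inspect the construction preceding Theorem \ref{main} and check that every ingredient is produced from the MV-operations of $A$ alone. The divisible hull (Remark \ref{doidiv}, Lemma \ref{divh}), the compact space $X=Max(A_d)$, the Dedekind--MacNeille completion $\semd{\dm{A}}$, and the generated Riesz MV-subalgebra $\mathbf{R}(A)=\langle\sem{A}\rangle_{RMV}$ are all defined purely in terms of the order and the MV-structure of $A$; crucially, by Lemma \ref{dmh} the scalar multiplication on $\semd{\dm{A}}$ is \emph{forced} by the formula $ra=\bigvee\{qa \mid q\in[0,1]\cap\mathbb{Q},\ q\le r\}$, so it too is determined by $A$ and not by any pre-existing $\cdot_i$. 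Hence the notation $\mathbf{R}(\mathbf{U}(V_1))$ and $\mathbf{R}(\mathbf{U}(V_2))$ denotes one and the same object. With this in hand, Corollary \ref{c4} gives $V_i\simeq \mathbf{R}(V_i)=\mathbf{R}(A)$ for $i=1,2$, and chaining the two isomorphisms yields $V_1\simeq\mathbf{R}(A)\simeq V_2$, as required.

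The step deserving genuine care is precisely this verification that $\mathbf{R}$ factors through the forgetful functor $\mathbf{U}$, i.e.\ that nothing in the construction secretly uses $\cdot_1$ or $\cdot_2$; once that is granted the corollary is immediate, so there is no serious obstacle beyond bookkeeping. I would additionally remark that the same reasoning in fact proves the stronger fact that the scalar multiplication of a semisimple Riesz MV-algebra is \emph{uniquely determined} by its MV-reduct (rational scalars being fixed by divisibility together with the identities $n(qa)=ma$, and irrational scalars by the supremum formula above), so that $V_1$ and $V_2$ coincide as structures; the isomorphism formulation via Corollary \ref{c4}, however, is the most economical route and is all that is needed here.
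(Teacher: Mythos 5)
Your proof is correct and follows essentially the same route as the paper: the paper applies Corollary \ref{c2} directly to the common reduct $A$ to get $V_1\simeq\mathbf{R}(A)\simeq V_2$, while you invoke Corollary \ref{c4}, which the paper itself derives from Corollary \ref{c2} in precisely this way. Your additional observation that $\mathbf{R}$ factors through $\mathbf{U}$ (so that $\mathbf{R}(A)$ is a single well-defined object) is correct and is the same point the paper uses implicitly.
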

\begin{proof} If $A$ is the MV-algebra reduct of $V_1$ and $V_2$ then, by Corollary \ref{c2},  we get 
$V_1\simeq {\mathbf R}(A)\simeq V_2$.
\end{proof}

The above result asserts that, given an MV-algebra $A$, there is at most one structure, up to isomorphism, of Riesz MV-algebra with the MV-algebra reduct $A$. 

In the sequel we prove that the Riesz MV-algebra hull preserves freeness.
For a nonempty  set $X$, we shall denote by $Free_{MV}(X)$ the free MV-algebra over $X$ and by $Free_{RMV}(X)$ the free Riesz MV-algebra over  $X$. The free algebras exist in the classes of MV-algebras and Riesz MV-algebras since both classes are varieties. 

\begin{proposition}\label{free-hull}
For any nonempty set $X$, ${\mathbf R}(Free_{MV}(X))\simeq Free_{RMV}(X)$. Therefore, the free MV-algebra generated by $X$ is  essentially embedded in the free Riesz MV-algebra generated by $X$. Moreover, the embedding can be chosen to be an inclusion.
\end{proposition}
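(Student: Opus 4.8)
The plan is to realize both free algebras concretely as function algebras and then invoke Corollary \ref{c2}. Since the variety of MV-algebras is generated by $[0,1]$, the free MV-algebra $Free_{MV}(X)$ is isomorphic to the MV-subalgebra of $[0,1]^{[0,1]^X}$ generated by the projections $\{\pi_x \mid x\in X\}$, where $\pi_x(\sigma)=\sigma(x)$ for $\sigma\in [0,1]^X$; similarly, since the variety of Riesz MV-algebras is generated by $[0,1]$, the free Riesz MV-algebra $Free_{RMV}(X)$ is isomorphic to the Riesz MV-subalgebra of $[0,1]^{[0,1]^X}$ generated by the same projections. The crucial point is that the index set is the same in both cases, namely $[0,1]^X$, because a Riesz MV-homomorphism into $[0,1]$ is nothing but an MV-homomorphism (Remark \ref{morf}), so the assignments of free generators to values in $[0,1]$ coincide for the two signatures and both canonical embeddings land in the same ambient power with the same projections.

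With these realizations fixed, I would note that $Free_{MV}(X)\subseteq Free_{RMV}(X)$ as subsets of $[0,1]^{[0,1]^X}$, since every MV-term function is in particular a Riesz MV-term function. Moreover the generating projections $\pi_x$ already lie in $Free_{MV}(X)$, so $\langle Free_{MV}(X)\rangle_{RMV}=Free_{RMV}(X)$. Both algebras are semisimple, being MV-subalgebras of the power $[0,1]^{[0,1]^X}$ (a subalgebra of a semisimple MV-algebra is semisimple). These are exactly the hypotheses of Corollary \ref{c2} with $A=Free_{MV}(X)$ and $V=Free_{RMV}(X)$, which yields $Free_{RMV}(X)\simeq {\mathbf R}(Free_{MV}(X))$. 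The inclusion $Free_{MV}(X)\hookrightarrow Free_{RMV}(X)$ is then essential by Corollary \ref{c3} (equivalently, by Theorem \ref{main}(b)), and since we have produced a genuine inclusion of function algebras, the final ``moreover'' is immediate.

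The step requiring the most care, and which I regard as the main obstacle, is justifying that the natural map $Free_{MV}(X)\to {\mathbf U}(Free_{RMV}(X))$ is an embedding, that is, that the concrete inclusion above is legitimate and not merely a matching of generators. Abstractly, the universal property of $Free_{MV}(X)$ produces a unique MV-homomorphism $\psi\colon Free_{MV}(X)\to {\mathbf U}(Free_{RMV}(X))$ sending each free generator to the corresponding free generator of $Free_{RMV}(X)$, and one must check that $\psi$ is injective. I would do this by separating points with homomorphisms into $[0,1]$: given any MV-homomorphism $h\colon Free_{MV}(X)\to [0,1]$, the universal property of $Free_{RMV}(X)$ supplies a Riesz MV-homomorphism $\tilde h\colon Free_{RMV}(X)\to [0,1]$ agreeing with $h$ on generators, whence $\tilde h\circ\psi=h$ by uniqueness of extensions into $[0,1]$. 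Since $Free_{MV}(X)$ is semisimple its points are separated by such $h$, so $\psi$ is injective. This is precisely the content that the concrete function-algebra picture packages cleanly, so in the write-up I would favour the concrete realization and treat the injectivity as the elementary observation that distinct term functions over $[0,1]$ remain distinct when scalar multiplication is adjoined to the signature.
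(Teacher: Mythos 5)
Your proposal is correct and follows essentially the same route as the paper: realize both free algebras inside $T=[0,1]^{[0,1]^X}$ as the MV-subalgebra, respectively the Riesz MV-subalgebra, generated by the projections $\pi_x$, observe that $\langle Free_{MV}(X)\rangle_{RMV}=Free_{RMV}(X)$, and conclude by Corollary \ref{c2}. The extra care you devote to the injectivity of the canonical map is a sound elaboration of the standard universal-algebra fact the paper invokes implicitly, not a different argument.
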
 
\begin{proof}
If  $T=[0,1]^{[0,1]^X}$ then $T$ is a Riesz MV-algebra with the operations defined component-wise. For any $x\in X$ we denote by $\pi_x\in T$ the corresponding projection function and we set $\tilde{X}=\{\pi_x\mid x\in X\}$.
Since the variety of MV-algebras is generated by $[0,1]_{MV}$ and the variety of Riesz MV-algebras is generated by $[0,1]_{RMV}$, 
by general properties in universal algebra, $Free_{MV}(X)$ is the MV-algebra generated by $\tilde{X}$ in $T$ and      
$Free_{RMV}(X)$ is the Riesz  MV-algebra generated by $\tilde{X}$ in $T$. We have that $Free_{MV}(X)= \langle \tilde{X}\rangle_{MV}$ and
\begin{center}
  $Free_{RMV}(X)= \langle \tilde{X}\rangle_{RMV}=\langle Free_{MV}(X)\rangle_{RMV}$.
\end{center}
The conclusion follows from Corollary \ref{c2}.
\end{proof}

\section{Categorical setting: the functor $\mathbf R$}\label{chull}

The main step for  obtaining  a functorial setting is to prove a general extension result for  morphisms, as which we do in Proposition \ref{ext-morf}. The results of this section follow closely the ideas from \cite{Bleier-1971}. 

\begin{remark}\begin{rm}\label{freeA}
Let $A$ be a semisimple MV-algebra and $X\subset A$ such that $\langle X\rangle_{MV}=A$. Using Proposition \ref{free-hull},  the free MV-algebra generated by $X$ is essentially included  in the free Riesz MV-algebra generated by $X$ and we denote this inclusion by
$\iota_X:Free_{MV}(X)\to Free_{RMV}(X)$.  
Let $\alpha:Free_{MV}(X)\to A$ be the unique MV-homomorphism such that $\alpha(x)=x$ for any $x\in X$ and 
$\ol{\alpha}:Free_{RMV}(X)\to R(A)$ be the unique MV-homomorphism such that $\ol{\alpha}(x)=\iota_A(x)$ for any $x\in X$.

\begin{center}
      \begin{tikzpicture}[
  font=\sffamily,
  every matrix/.style={ampersand replacement=\&,column sep=2cm,row sep=1cm},
  source/.style={thick,inner sep=.1cm},
  to/.style={->,>=stealth',shorten >=.1pt,semithick,font=\sffamily\footnotesize}]
  
 \matrix{
    \node[source] (A) {$Free_{MV}(X)$}; 
    \& \node[source] (tensorA) {$Free_{RMV}(X)$};   \\
    
    \node[source] (B) {$A$}; 
    \& \node[source] (tensorB) {${\mathbf R}(A)$};   \\
 };
 
\draw [to]  (A) -- node[midway,left] {$\alpha$} (B) ;   

\draw [right hook->,>=stealth',shorten >=.1pt,semithick,font=\sffamily\footnotesize]  (A) --node[midway,above] {$\iota_X$} (tensorA) ; 
\draw [right hook->,>=stealth',shorten >=.1pt,semithick,font=\sffamily\footnotesize]  (B) --node[midway,above] {$\iota_A$} (tensorB) ;   
\draw [->,>=stealth',shorten >=.1pt,semithick,font=\sffamily\footnotesize,dotted] (tensorA) -- node[midway,right] {$\overline{\alpha}$} (tensorB) ;
\end{tikzpicture}
  \end{center}
\end{rm}\end{remark}

\begin{proposition}\label{help1}
Under the above hypothesis, the following properties hold:
\begin{list}
{(\alph{cont})}{\usecounter{cont}\setlength{\leftmargin}{0.5cm}}
\item $\ol{\alpha}\circ\iota_X=\iota_A\circ\alpha$,
\item $\alpha$ and $\ol{\alpha}$ are surjective,
\item $\ker{\ol{\alpha}}=\bigcap\{J\mid J \in {\mathcal J}\}$, where
\begin{center}
${\mathcal J}=\left\{J\subseteq  Free_{RMV}(X)\mid J \mbox{  ideal},\,\,  \iota_X(\ker{\alpha})\subseteq J,\right.$

$\left.  \mbox{and }Free_{RMV}(X)/_J \mbox{ is semisimple}\right\}$.
\end{center}
 
\end{list}
\end{proposition}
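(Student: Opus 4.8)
Parts (a) and (b) I would dispatch quickly. For (a), both $\ol{\alpha}\circ\iota_X$ and $\iota_A\circ\alpha$ are MV-homomorphisms out of the free algebra $Free_{MV}(X)$, so it suffices to compare them on the generators $x\in X$, where both equal $\iota_A(x)$ (using that $\iota_X$ is an inclusion together with the defining identities $\alpha(x)=x$ and $\ol{\alpha}(x)=\iota_A(x)$). For (b), $\alpha$ is onto because its image is the MV-subalgebra generated by $\alpha(X)=X$, which is $A$ by hypothesis; likewise $\ol{\alpha}$ is onto because its image is the Riesz MV-subalgebra generated by $\{\iota_A(x)\mid x\in X\}$, and $\langle\iota_A(X)\rangle_{RMV}=\langle\iota_A(A)\rangle_{RMV}={\mathbf R}(A)$ since $\iota_A(A)\subseteq\langle\iota_A(X)\rangle_{RMV}$.

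The substance is (c). Write $F=Free_{RMV}(X)$, $F_0=Free_{MV}(X)$ and $S=\iota_X(\ker\alpha)$. First I would record that $\ker\ol{\alpha}\in\mathcal{J}$: by (a) we have $\ol{\alpha}(\iota_X(k))=\iota_A(\alpha(k))=0$ for $k\in\ker\alpha$, so $S\subseteq\ker\ol{\alpha}$, and by (b) the quotient $F/\ker\ol{\alpha}\simeq{\mathbf R}(A)$ is semisimple. This already gives $\bigcap\mathcal{J}\subseteq\ker\ol{\alpha}$, so the real content is the reverse inclusion, i.e.\ that $\ker\ol{\alpha}$ is the \emph{least} member of $\mathcal{J}$.

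The plan is to pass to maximal ideals. As in Proposition \ref{free-hull}, I realize $F$ as a separating algebra of continuous $[0,1]$-valued functions on $[0,1]^X$; every nonzero MV-homomorphism $F\to[0,1]$ is an evaluation $\mathrm{ev}_p$ at a point $p\in[0,1]^X$, and these exhaust $Max(F)$ (a nontrivial simple Riesz MV-algebra is $[0,1]$). Since an ideal gives a semisimple quotient exactly when it is an intersection of maximal ideals, one checks $\bigcap\mathcal{J}=\bigcap\{M\in Max(F)\mid S\subseteq M\}$, and $S\subseteq M_p$ precisely when every $k\in\ker\alpha$ vanishes at $p$; call this set of points $V_0$. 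On the other side, $F/\ker\ol{\alpha}$ is semisimple, so $\ker\ol{\alpha}=\bigcap\{M_p\mid M_p\supseteq\ker\ol{\alpha}\}$. Thus the whole statement reduces to the equality of point-sets $\{p\mid M_p\supseteq\ker\ol{\alpha}\}=V_0$. Here $\subseteq$ is immediate from $S\subseteq\ker\ol{\alpha}$. For $\supseteq$, given $p\in V_0$ the map $\mathrm{ev}_p|_{F_0}$ kills $\ker\alpha$, hence factors as $\delta\circ\alpha$ for a unique MV-homomorphism $\delta\colon A\to[0,1]$ with $\delta(x)=p_x$; if $\delta$ lifts to some $\rho\colon{\mathbf R}(A)\to[0,1]$ with $\rho\circ\iota_A=\delta$, then $\rho\circ\ol{\alpha}$ and $\mathrm{ev}_p|_F$ agree on the generators $\pi_x$, hence coincide, yielding $\ker\ol{\alpha}\subseteq\ker(\mathrm{ev}_p|_F)=M_p$.

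Everything therefore comes down to the extension claim that \emph{every} MV-homomorphism $\delta\colon A\to[0,1]$ extends along $\iota_A$ to ${\mathbf R}(A)$. This is exactly where Theorem \ref{main}(c) cannot be quoted verbatim, since $\delta$ need not be injective (its kernel is a maximal ideal of $A$); dealing with this non-injectivity is the main obstacle, and I would resolve it concretely rather than through a universal property. First I extend $\delta$ to $\delta_d\colon\semd{A}\to[0,1]$ by the extension part of Lemma \ref{divh}(b), which holds for arbitrary homomorphisms. Its kernel is a maximal ideal $M_q=\{f\mid f(q)=0\}$ of $\semd{A}$ for some $q\in X=Max(\semd{A})$, and because $\semd{A}$ is divisible the quotient $\semd{A}/M_q$ is a divisible simple MV-algebra admitting a \emph{unique} embedding into $[0,1]$ (the divisible rationals must go to the corresponding rationals, and each element is then pinned down by its cut against them); hence $\delta_d=\mathrm{ev}_q$. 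Finally, by Corollary \ref{c2} I identify ${\mathbf R}(A)$ with the Riesz MV-subalgebra generated by $\sem{A}$ \emph{pointwise} inside $C(X)$ (this is semisimple, contains $\sem{A}$ and is generated by it, so it is isomorphic to ${\mathbf R}(A)$ via an isomorphism fixing $\sem{A}$, hence compatible with $\iota_A$). Under this identification the required lift is simply $\rho=\mathrm{ev}_q\colon{\mathbf R}(A)\to[0,1]$, an MV-homomorphism by restriction, and one has $\rho\circ\iota_A=\delta$. This closes the reduction and establishes $\ker\ol{\alpha}=\bigcap\mathcal{J}$.
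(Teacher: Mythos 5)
Your treatment of (a) and (b) coincides with the paper's. For (c) your argument is correct, but it takes a genuinely different route. The paper stays entirely inside ideal theory: it sets $\ol{J}=\bigcap\mathcal{J}$, shows the quotient $Free_{RMV}(X)/_{\ol{J}}$ is semisimple via the subdirect-product representation, invokes Corollary \ref{c3} to conclude that the image of $\iota_X(Free_{MV}(X))$ is \emph{essentially} embedded in that quotient, and then a short element computation (with $w=y\odot(nz)^*$) shows that any $z\in\ker{\ol{\alpha}}\setminus\ol{J}$ would force a nonzero element of $\iota_X(\ker{\alpha})$ to survive in the quotient, a contradiction; no maximal ideals or $[0,1]$-valued morphisms appear. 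You instead reduce the whole statement to maximal ideals of $Free_{RMV}(X)$, i.e.\ to evaluations at points of $[0,1]^X$, so that everything hinges on extending an arbitrary homomorphism $\delta\colon A\to[0,1]$ along $\iota_A$ --- which is exactly the $V=[0,1]$ instance of Proposition \ref{ext-morf}, the result the paper later \emph{deduces} from this proposition. You correctly flag the circularity danger and the non-injectivity obstacle, and you resolve them concretely via Lemma \ref{divh}(b), the uniqueness of the embedding of a simple MV-algebra into $[0,1]$, and the pointwise realization of ${\mathbf R}(A)$ inside $C(Max(A_d))$ furnished by Corollary \ref{c2}; the bookkeeping step identifying $\bigcap\mathcal{J}$ with the intersection of the maximal ideals containing $\iota_X(\ker{\alpha})$ is also sound, since semisimple quotients correspond precisely to intersections of maximal ideals. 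The trade-off: the paper's proof is shorter and leans only on the essentiality machinery already in place, while yours is longer but makes the geometric content explicit (the hull adds no $[0,1]$-valued characters beyond those of $A$) and establishes en route, independently of Proposition \ref{ext-morf}, that every character of $A$ extends to ${\mathbf R}(A)$ and that ${\mathbf R}(A)$ can be realized pointwise in $C(Max(A_d))$.
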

\begin{proof}
${}$

\begin{list}
{}{\usecounter{cont}\setlength{\leftmargin}{0.5cm}}
\item (a) $(\ol{\alpha}\circ\iota_X)(x)=\iota_A(x)=(\iota_A\circ\alpha)(x)$ for any $x\in X$, so the morphisms coincide on generators.
\item (b) $\alpha(Free_{MV}(X))=\alpha(\langle  X\rangle  _{MV})=\langle  \alpha(X)\rangle  _{MV}=A$ and 
\begin{center}
$\ol{\alpha}(Free_{RMV}(X))=\ol{\alpha}(\langle  Free_{MV}(X)\rangle  _{RMV})=$ 

$\langle  \iota_A(\alpha(Free_{MV}(X)))\rangle  _{RMV}=\langle  \iota_A(A)\rangle  _{RMV}={\mathbf R}(A)$.
\end{center}
\item (c) If $z\in\ker{\alpha}$ then $\iota_A(\alpha(z))=0$, so $\ol{\alpha}(\iota_X(z))=0$. It follows that 
$\iota_X(\ker{\alpha})\subseteq \ker{\ol{\alpha}}$. In fact, we have $\iota_X(\ker{\alpha})=\ker{\ol{\alpha}}\cap \iota_X(Free_{MV}(X))$. We set
  $\ol{J}=\bigcap\{J\mid J\in {\mathcal J}\}$ and $F=Free_{RMV}(X)/_{\ol{J}}$. By a general result of universal algebra \cite[Proposition 7.1]{Cohn-1965}, $F$ is isomorphic with a subdirect product of the family $\{Free_{RMV}(X)/_J\mid J\in {\mathcal J}\}$, so $F$ is a subalgebra of a direct product of semisimple MV-algebras. Therefore, $F$ is a semisimple MV-algebra.  
If we set  $M=\{y/_{\ol{J}}\mid y\in \iota_X(Free_{MV}(X))\}$ then $\langle  M\rangle  _{RMV}=F$, so ${\mathbf R}(M)=F$ by Corollary \ref{c3} and 
the inclusion $M\subseteq F$ is essential. 

It is clear that $\iota_X(\ker{\alpha})\subseteq \ol{J}\subseteq\ker{\ol{\alpha}}$. 
In order to prove that $\ol{J}=\ker{\ol{\alpha}}$ , we assume that
there exists an element $z\in \ker{\ol{\alpha}}\setminus\ol{J}$. Hence $z/_{\ol{J}}\neq 0$ in  $F$. 
Since the inclusion $M\subseteq F$ is essential it follows that there exists an element $y\in \iota_X(Free_{MV}(X))$ such that 
$ 0< y/_{\ol{J}}\leq nz/_{\ol{J}}$. 
Note that $y/_{\ol{J}}\neq 0$ in $F$.
We denote $w=y\odot(nz)^*$, so $w\in \ol{J}$ and $y\leq (nz)\vee (y)=(nz)\oplus w$. Note that $w\in \ol{J}\subseteq\ker{\ol{\alpha}}$ and 
$z\in\ker{\ol{\alpha}}$, so we get $y\in \ker{\ol{\alpha}}$. But $y\in \iota_X(Free_{MV}(X))$, so $y\in \iota_X(Free_{MV}(X))\cap \ker{\ol{\alpha}}=\iota_X(\ker{\alpha})$. Since $\iota_X(\ker{\alpha})\subseteq\ol{J}$, it follows that $y/_{\ol{J}}=0$ in $F$, which is  a contradiction. 
\end{list}
\end{proof}

\begin{proposition}\label{ext-morf} 
Let $A$ be a semisimple MV-algebra.
For any semisimple Riesz MV-algebra $V$ and  for any  MV-homomorphism $f:A\to V$ there exists a unique MV-homomorphism 
$f_R:\mathbf{R}(A)\to V$ such that $f_R(\iota_A(a))=f(a)$, for any $a\in A$.
\begin{center}
     	 \begin{tikzpicture}[
 		 font=\sffamily,
		  every matrix/.style={ampersand replacement=\&,column sep=1cm,row sep=1cm},
		  source/.style={thick,inner sep=.1cm},
		  to/.style={->,>=stealth',shorten >=.1pt,semithick,font=\sffamily\footnotesize}]
  
		 \matrix{
		    \node[source] (A) {$A$}; 
		    \&	
		    \& \node[source] (R) {$\mathbf{R}(A)$};   \\
    
		    \& \node[source] (tensorA) {$V$}; 
  		    \& \\
 		};
		\draw [right hook->,>=stealth',shorten >=.1pt,semithick,font=\sffamily\footnotesize]  (A) --node[midway,above] {$\iota_A$} (R) ; 
		\draw [to]  (A) -- node[midway,left] {$f$} (tensorA) ;     
		\draw [<-,>=stealth',shorten >=.1pt,semithick,font=\sffamily\footnotesize,dotted] (tensorA) -- node[midway,right] {$f_R$} (R) ;
	\end{tikzpicture}
\end{center}
\end{proposition}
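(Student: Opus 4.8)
The plan is to reduce the statement to the kernel characterization of Proposition \ref{help1}(c). First I would fix a generating set for $A$, the simplest choice being $X=A$ itself, so that $\langle X\rangle_{MV}=A$ and the whole apparatus of Remark \ref{freeA} becomes available: the free MV-algebra $Free_{MV}(X)$ with its surjection $\alpha:Free_{MV}(X)\to A$, the free Riesz MV-algebra $Free_{RMV}(X)$ with the essential inclusion $\iota_X$, and the induced surjection $\ol{\alpha}:Free_{RMV}(X)\to\mathbf{R}(A)$.

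Next, given $f:A\to V$ with $V$ a semisimple Riesz MV-algebra, I would invoke the universal property of the free Riesz MV-algebra. Since $f\circ\alpha$ sends each $x\in X$ to $f(x)\in V$, there is a unique MV-homomorphism (equivalently Riesz MV-homomorphism, by Remark \ref{morf}) $\beta:Free_{RMV}(X)\to V$ agreeing with $f$ on $X$, and consequently $\beta\circ\iota_X=f\circ\alpha$. The entire argument then hinges on the inclusion $\ker\ol{\alpha}\subseteq\ker\beta$. Once this is secured, the surjectivity of $\ol{\alpha}$ (Proposition \ref{help1}(b)) yields, via the homomorphism theorem, a unique MV-homomorphism $f_R:\mathbf{R}(A)\to V$ with $f_R\circ\ol{\alpha}=\beta$.

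The core step, which I expect to be the main obstacle, is therefore proving $\ker\ol{\alpha}\subseteq\ker\beta$, and this is precisely what Proposition \ref{help1}(c) is engineered to deliver. I would show that $\ker\beta$ belongs to the family $\mathcal{J}$. On the one hand $\iota_X(\ker\alpha)\subseteq\ker\beta$, because for $z\in\ker\alpha$ one computes $\beta(\iota_X(z))=f(\alpha(z))=f(0)=0$. On the other hand $Free_{RMV}(X)/\ker\beta$ is isomorphic to the image $\beta(Free_{RMV}(X))$, which is a subalgebra of the semisimple algebra $V$ and hence itself semisimple. Thus $\ker\beta\in\mathcal{J}$, and since Proposition \ref{help1}(c) identifies $\ker\ol{\alpha}=\bigcap\{J\mid J\in\mathcal{J}\}$, the desired inclusion $\ker\ol{\alpha}\subseteq\ker\beta$ follows at once.

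Finally I would verify the two required properties of $f_R$. For the factorization identity it suffices to check it on generators: for $x\in X$, using $\iota_A\circ\alpha=\ol{\alpha}\circ\iota_X$ from Proposition \ref{help1}(a), one gets $f_R(\iota_A(x))=f_R(\ol{\alpha}(\iota_X(x)))=\beta(\iota_X(x))=f(x)$. Since $f_R\circ\iota_A$ and $f$ are MV-homomorphisms agreeing on the generating set $X$ of $A$, they coincide on all of $A$, so $f_R(\iota_A(a))=f(a)$ for every $a\in A$. Uniqueness is then immediate: by Theorem \ref{main}(a) we have $\mathbf{R}(A)=\langle\iota_A(A)\rangle_{RMV}$, so any MV-homomorphism agreeing with $f$ on $\iota_A(A)$ is already determined on $\mathbf{R}(A)$, because MV-homomorphisms between Riesz MV-algebras automatically respect the scalar multiplication.
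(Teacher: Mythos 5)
Your proposal is correct and follows essentially the same route as the paper: pass to a generating set $X$ of $A$, lift $f$ to $\ol{f}$ (your $\beta$) on $Free_{RMV}(X)$, use Proposition \ref{help1}(c) to get $\ker\ol{\alpha}\subseteq\ker\beta$ (the paper leaves implicit the verification that $\ker\beta\in\mathcal{J}$, which you spell out), and factor through $\ol{\alpha}$. The only cosmetic differences are your choice $X=A$ and your uniqueness argument via $\mathbf{R}(A)=\langle\iota_A(A)\rangle_{RMV}$ instead of via the surjection $\ol{\alpha}$; both are sound.
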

\begin{proof}

Assume $V$ is a semisimple Riesz MV-algebra and $f:A\to V$ is an MV-homomorphism. We consider  $X\subseteq A$ such that $\langle X\rangle_{MV}=A$ and we define $\alpha:Free_{MV}(X)\to A$ and $\ol{\alpha}:Free_{RMV}(X)\to {\mathbf R}(A)$ as in Remark \ref{freeA}.  Let $\ol{f}:Free_{RMV}(X)\to V$ be the unique MV-homomorphism such that $\ol{f}(x)=f(x)$ for any $x\in X$.
By Proposition \ref{help1} (b), we infer that $Free_{RMV}(X)/_{\ker{\ol{\alpha}}}\simeq {\mathbf R}(A)$ and we can safely identify them.

\begin{center}
  \begin{tikzpicture}[
  font=\sffamily,
  every matrix/.style={ampersand replacement=\&,column sep=2cm,row sep=1cm},
  source/.style={thick,inner sep=.25cm},
  to/.style={->,>=stealth',shorten >=.2pt,semithick,font=\sffamily\footnotesize}]
  
 \matrix{
   \& \node[source](FMV){$Free_{MV}(X)$}; \& \\
   \node[source](A){$A$}; \& \node[source](RA){$R(A)$}; \& \node[source](FRMV){$Free_{RMV}(X)$}; \\
   \& \node[source](V){$V$}; \& \\ 	
 };

\draw [to]  (FMV) -- node[midway,left] {$\alpha$} (A) ;    
\draw [right hook->,>=stealth',shorten >=.2pt,semithick,font=\sffamily\footnotesize]  (FMV) --node[midway,right] {$\iota_X$} (FRMV) ;
\draw [right hook->,>=stealth',shorten >=.2pt,semithick,font=\sffamily\footnotesize]  (A) --node[midway,above] {$\iota_A$} (RA) ;
\draw [to]  (FRMV) -- node[midway,above] {$\overline{\alpha}$} (RA) ;
\draw [to]  (A) -- node[midway,left] {$f$} (V) ;
\draw [to]  (FRMV) -- node[midway,right] {$\overline{f}$} (V) ;    
\draw [->,>=stealth',shorten >=.1pt,semithick,font=\sffamily\footnotesize,dotted] (RA) -- node[midway,right] {$f_R$} (V) ;
 
\end{tikzpicture}
\end{center}

%
%

We note that $\ol{f}(Free_{RMV}(X))$ is a Riesz  MV-subalgebra of $V$, so it is semisimple. Therefore, by Proposition \ref{help1} (c) it follows that $\ker{\ol{\alpha}}\subseteq \ker{\ol{f}}$, so there exists a unique  MV-homomorphism $f_R:{\mathbf R}(A)\to V$ such that $f_R\circ\ol{\alpha}=\ol{f}$. It follows that
\begin{center}
$f_R\circ{\iota_A}\circ\alpha=f_R\circ\ol{\alpha}\circ\iota_X=\ol{f}\circ\iota_X=f\circ\alpha$.
 \end{center}
Since $\alpha$ is surjective, we get $f_R\circ\iota_A=f$. 

In order to prove the uniqueness, assume that $g:{\mathbf R}(A)\to V$ is an MV-homomorphism such that $g\circ\iota_A=f$. It follows that 
$g\circ\ol{\alpha}\circ\iota_X=g\circ\iota_A\circ\alpha=f\circ\alpha=\ol{f}\circ\iota_X$ and we get 
$g\circ\ol{\alpha}=\ol{f}$, since they coincide on the generators of $Free_{RMV}(X)$.  We proved that $g$ satisfies the property that uniquely defines $f_R$, so $g=f_R$.
\end{proof}

\begin{lemma}\label{RMV-homomorphism}\label{RMV-homo}
Let $A$ and $B$ be semisimple MV-algebras.  For any homomorphism $h: A \rightarrow B$, there is a unique  homomorphism ${\mathbf R}(h): {\mathbf R}(A) \to {\mathbf R}(B)$ such that $${\mathbf R}(h)\circ \iota_A = \iota_B \circ h.$$ In addition, if $h$ is an embedding, then ${\mathbf R}(h)$ is also an embedding. 

  \begin{center}
      \begin{tikzpicture}[
  font=\sffamily,
  every matrix/.style={ampersand replacement=\&,column sep=2cm,row sep=1cm},
  source/.style={thick,inner sep=.1cm},
  to/.style={->,>=stealth',shorten >=.1pt,semithick,font=\sffamily\footnotesize}]
  
 \matrix{
    \node[source] (A) {$A$}; 
    \& \node[source] (tensorA) {${\mathbf R}(A)$};   \\
    
    \node[source] (B) {$B$}; 
    \& \node[source] (tensorB) {${\mathbf R}(B)$};   \\
 };
\draw [to]  (A) -- node[midway,left] {$h$} (B) ;   
\draw [right hook->,>=stealth',shorten >=.1pt,semithick,font=\sffamily\footnotesize]  (A) -- node[midway,above] {$\iota_A$}  (tensorA) ; 
\draw [right hook->,>=stealth',shorten >=.1pt,semithick,font=\sffamily\footnotesize]  (B) -- node[midway,above] {$\iota_B$}  (tensorB) ; 
\draw [->,>=stealth',shorten >=.1pt,semithick,font=\sffamily\footnotesize,dotted] (tensorA) -- node[midway,right] {${\mathbf R}(h)$} (tensorB) ;
\end{tikzpicture}
  \end{center}
\end{lemma}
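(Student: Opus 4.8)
The plan is to obtain ${\mathbf R}(h)$ directly from the universal extension property of Proposition \ref{ext-morf}, applied to a suitably chosen composite. The whole statement is essentially bookkeeping on top of that proposition together with Theorem \ref{main}(c), so I do not expect any serious difficulty; the only point demanding a moment's care is checking that the intended codomain ${\mathbf R}(B)$ is a \emph{semisimple} Riesz MV-algebra, which is precisely the hypothesis those two results require.

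First I would form the composite $\iota_B\circ h\colon A\to{\mathbf R}(B)$. By construction ${\mathbf R}(B)$ is a Riesz MV-subalgebra of the Dedekind-MacNeille completion $\semd{\dm{B}}$, which is complete and therefore semisimple; since a subalgebra of a semisimple MV-algebra is semisimple, ${\mathbf R}(B)$ is a semisimple Riesz MV-algebra. Thus $\iota_B\circ h$ is an MV-homomorphism from the semisimple MV-algebra $A$ into a semisimple Riesz MV-algebra, and Proposition \ref{ext-morf} applies verbatim: it yields a unique MV-homomorphism $g\colon{\mathbf R}(A)\to{\mathbf R}(B)$ with $g\circ\iota_A=\iota_B\circ h$. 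I would then set ${\mathbf R}(h):=g$, so that the required identity ${\mathbf R}(h)\circ\iota_A=\iota_B\circ h$ holds by definition.

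For uniqueness I would simply invoke the uniqueness clause of Proposition \ref{ext-morf}: any MV-homomorphism ${\mathbf R}(A)\to{\mathbf R}(B)$ satisfying the same factorisation through $\iota_A$ must agree with $g$ on $\iota_A(A)$, hence on the generated subalgebra $\langle\iota_A(A)\rangle_{RMV}={\mathbf R}(A)$, and therefore equals ${\mathbf R}(h)$.

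Finally, for the embedding assertion, suppose $h$ is an embedding. Since $\iota_B$ is an embedding by Theorem \ref{main}(a), the composite $f:=\iota_B\circ h$ is an MV-embedding of $A$ into the semisimple Riesz MV-algebra ${\mathbf R}(B)$. Here I would appeal to the sharper conclusion of Theorem \ref{main}(c), which produces an MV-\emph{embedding} $f_R\colon{\mathbf R}(A)\to{\mathbf R}(B)$ satisfying $f_R\circ\iota_A=f=\iota_B\circ h$. By the uniqueness already established, $f_R$ must coincide with ${\mathbf R}(h)$, and hence ${\mathbf R}(h)$ is an embedding, completing the proof.
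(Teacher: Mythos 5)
Your proposal is correct and follows the paper's own route exactly: the paper's proof is simply ``apply Proposition \ref{ext-morf} with $V={\mathbf R}(B)$ and $f=\iota_B\circ h$, and set ${\mathbf R}(h)=f_R$.'' You merely spell out the details the paper leaves implicit (semisimplicity of ${\mathbf R}(B)$, and the embedding case via Theorem \ref{main}(c) combined with uniqueness), all of which check out.
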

\begin{proof}
We apply Proposition \ref{ext-morf}  for $V= {\mathbf R}(B)$ and for $f=\iota_B\circ h$. Therefore ${\mathbf R}(h)=f_R$.  
\end{proof}

We consider the forgetful functor between the category $\mathcal{RMV}_s$ of semisimple RMV-algebras and the category $\mathcal{MV}_s$ of semisimple  MV-algebras:
\begin{center}
${\mathbf U} : \mathcal{RMV}_s \rightarrow \mathcal{MV}_s$,
\end{center}
which forgets the scalar multiplication.

We also define the functor $${\mathbf R}: \mathcal{MV}_s \rightarrow \mathcal{RMV}_s$$  as follows:
\begin{list}
{$\cdot$}{\usecounter{cont}\setlength{\leftmargin}{0.5cm}}
	\item for any semisimple MV-algebras $A$, ${\mathbf R}(A) $ is the Riesz hull of $A$, 
	\item for any MV-homomorphism $h:A \rightarrow B$, ${\mathbf R}(h)$ is the unique homomorphism such that ${\mathbf U}({\mathbf R}(h)) \circ \iota_A = \iota_B \circ h$.
\end{list}

\begin{theorem}
Under the above settings, $({\mathbf R}, {\mathbf U})$ is an adjoint pair.
\end{theorem}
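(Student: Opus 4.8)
The plan is to prove that $\mathbf{R}$ is left adjoint to $\mathbf{U}$ by showing that, for each semisimple MV-algebra $A$, the essential embedding $\iota_A\colon A\to\mathbf{U}(\mathbf{R}(A))$ is a universal arrow from $A$ to the functor $\mathbf{U}$; the existence of such universal arrows for all $A$ is one of the standard equivalent formulations of an adjunction, and from it the hom-set isomorphism together with its naturality follows formally. Before starting, I would record the harmless but necessary bookkeeping remark that, by Remark \ref{morf}, every Riesz MV-homomorphism is just an MV-homomorphism, so $\mathbf{U}$ is full and faithful and acts as the identity on the underlying functions of morphisms. Consequently the datum ``$f\colon A\to V$ an MV-homomorphism'' appearing in Proposition \ref{ext-morf} is exactly a morphism $A\to\mathbf{U}(V)$ in $\mathcal{MV}_s$, the extension $f_R\colon\mathbf{R}(A)\to V$ is a morphism in $\mathcal{RMV}_s$, and $\mathbf{U}(f_R)$ coincides with $f_R$ as a function.

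With this identification, the universal property is immediate from Proposition \ref{ext-morf}: for every $V\in\mathcal{RMV}_s$ and every $f\colon A\to\mathbf{U}(V)$ there is a unique morphism $f_R\colon\mathbf{R}(A)\to V$ in $\mathcal{RMV}_s$ with $\mathbf{U}(f_R)\circ\iota_A=f$. Unwinding this, the map $g\mapsto\mathbf{U}(g)\circ\iota_A$ is a bijection
\[
\mathrm{Hom}_{\mathcal{RMV}_s}(\mathbf{R}(A),V)\ \xrightarrow{\ \cong\ }\ \mathrm{Hom}_{\mathcal{MV}_s}(A,\mathbf{U}(V)),
\]
with inverse $f\mapsto f_R$. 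Indeed, the composite $f\mapsto f_R\mapsto \mathbf{U}(f_R)\circ\iota_A$ is the identity by the existence clause of Proposition \ref{ext-morf}, and $g\mapsto \mathbf{U}(g)\circ\iota_A\mapsto(\mathbf{U}(g)\circ\iota_A)_R$ is the identity by the uniqueness clause, since $g$ itself trivially satisfies $\mathbf{U}(g)\circ\iota_A=\mathbf{U}(g)\circ\iota_A$.

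It then remains to verify naturality of this bijection in both variables, which is routine and forms no real obstacle. Naturality in $V$ is just functoriality of $\mathbf{U}$: for $k\colon V\to V'$ one has $\mathbf{U}(k\circ g)\circ\iota_A=\mathbf{U}(k)\circ(\mathbf{U}(g)\circ\iota_A)$. Naturality in $A$ rests on the defining identity $\mathbf{U}(\mathbf{R}(h))\circ\iota_{A'}=\iota_A\circ h$ supplied by Lemma \ref{RMV-homo}: for $h\colon A'\to A$ and $g\colon\mathbf{R}(A)\to V$ one computes $\mathbf{U}(g\circ\mathbf{R}(h))\circ\iota_{A'}=\mathbf{U}(g)\circ\iota_A\circ h$, which is the required commutation. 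Since Lemma \ref{RMV-homo} already establishes that $\mathbf{R}$ is a well-defined functor and that $\iota$ is a natural transformation $\mathrm{Id}_{\mathcal{MV}_s}\Rightarrow\mathbf{U}\mathbf{R}$ (the unit of the adjunction), nothing further is needed.

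I do not anticipate a genuine difficulty: all the substantive content---the construction of $\mathbf{R}(A)$, essentiality of $\iota_A$, and the \emph{unique} extension of homomorphisms to the hull---has already been discharged in Proposition \ref{ext-morf} and Lemma \ref{RMV-homo}, so what is left is the purely formal translation into the language of adjoint functors. The one point that demands a little care is precisely the identification between a morphism and its image under $\mathbf{U}$, i.e. keeping track of when a map is regarded as a Riesz MV-homomorphism and when as a plain MV-homomorphism; Remark \ref{morf} makes this transparent and keeps the argument clean.
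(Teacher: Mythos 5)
Your proof is correct, and all of its substance rests --- exactly as in the paper --- on Proposition \ref{ext-morf} (existence and uniqueness of the extension $f_R$) together with Lemma \ref{RMV-homo} and Remark \ref{morf}. The difference is in which equivalent formulation of adjointness you verify. You package Proposition \ref{ext-morf} as the statement that $\iota_A\colon A\to\mathbf{U}(\mathbf{R}(A))$ is a universal arrow from $A$ to $\mathbf{U}$, deduce the hom-set bijection $g\mapsto\mathbf{U}(g)\circ\iota_A$ from existence plus uniqueness, and check naturality directly; this route never needs the counit, and in particular never invokes Corollary \ref{c4}. The paper instead exhibits the adjunction through the unit $\eta_A=\iota_A$ \emph{and} the counit $\varepsilon_V=\iota_V^{-1}$ (which is where Corollary \ref{c4}, asserting that $\iota_V$ is an isomorphism for a semisimple Riesz MV-algebra $V$, is used), and then verifies two extension properties: that every $f\colon A\to\mathbf{U}(V)$ factors as $\mathbf{U}(g)\circ\eta_A$, and that every $g\colon\mathbf{R}(A)\to V$ factors as $\varepsilon_V\circ\mathbf{R}(f)$. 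Your version is the more economical and standard one, and it makes the naturality checks explicit where the paper leaves them implicit; the paper's version has the small side benefit of displaying the counit concretely and thereby recording that the adjunction restricted to $\mathcal{RMV}_s$ is a reflection (the counit is an isomorphism), a fact your argument recovers only indirectly via the fullness and faithfulness of $\mathbf{U}$. Either way, no gap.
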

\begin{proof}
One can easily see that ${\mathbf R}$ is a functor. If $A$ is  a semisimple MV-algebra we define
$\eta_A:A\to{\mathbf U}({\mathbf R}(A))$, $\eta_A(a)=\iota_A(a)$ for any $a\in A$.
If $V$ is a semisimple Riesz MV-algebra, let $\varepsilon_V=\iota_V^{-1}:{\mathbf R}({\mathbf U}(V))\to V$. By Corollary \ref{c4}, $\varepsilon_V$ is  an isomorphism.   

In order to prove that $\mathbf R$ is a left adjoint to $\mathbf U$, we have to prove the following properties, for any MV-algebra $A$ and Riesz MV-algebra $V$:
\begin{list}
{(\arabic{cont})}{\usecounter{cont}\setlength{\leftmargin}{0.5cm}}
	\item for any $f\in \mathcal{MV}_s(A,{\mathbf U}(V))$,  there exists $g\in \mathcal{RMV}_s({\mathbf R}(A), V)$ such that 
$${\mathbf U}(g)\circ \eta_A=f,$$
	\item for any   $g\in \mathcal{RMV}_s({\mathbf R}(A), V)$   there exists $f\in \mathcal{MV}_s(A,{\mathbf U}(V))$ such that 
$$\varepsilon_V\circ{\mathbf R}(f)=g.$$
\end{list}
 
\begin{center}
\begin{tikzpicture}[
 		 font=\sffamily,
		  every matrix/.style={ampersand replacement=\&,column sep=1cm,row sep=1cm},
		  source/.style={thick,inner sep=.1cm},
		  to/.style={->,>=stealth',shorten >=.1pt,semithick,font=\sffamily\footnotesize}]
  
		 \matrix{
		    \node[source] (A) {$A$}; 
		    \&	
		    \& \node[source] (R) {${\mathbf U}({\mathbf R}(A))$};   \\
    
		    \& \node[source] (tensorA) {${\mathbf U}(V)$}; 
  		    \& \\
 		};
		\draw [to]  (A) -- node[midway,above] {$\eta_A$} (R) ;   
		\draw [to]  (A) -- node[midway,left] {$f$} (tensorA) ;     
		\draw [<-,>=stealth',shorten >=.1pt,semithick,font=\sffamily\footnotesize,dotted] (tensorA) -- node[midway,right] {${\mathbf U}(g)$} (R) ;
	\end{tikzpicture}
     	 \begin{tikzpicture}[
 		 font=\sffamily,
		  every matrix/.style={ampersand replacement=\&,column sep=1cm,row sep=1cm},
		  source/.style={thick,inner sep=.1cm},
		  to/.style={->,>=stealth',shorten >=.1pt,semithick,font=\sffamily\footnotesize}]
  
		 \matrix{
		    \node[source] (A) {${\mathbf R}({\mathbf U}(V))$}; 
		    \&	
		    \& \node[source] (R) {$V$};   \\
    
		    \& \node[source] (tensorA) {${\mathbf R}(A)$}; 
  		    \& \\
 		};
		\draw [to]  (A) -- node[midway,above] {$\varepsilon_V$} (R) ;   
		\draw [->,>=stealth',shorten >=.1pt,semithick,font=\sffamily\footnotesize,dotted]  (tensorA) -- node[midway,left] {${\mathbf R}(f)$} (A) ;     
		\draw [to] (tensorA) -- node[midway,right] {$g$} (R) ;
	\end{tikzpicture}
\end{center}

The property (1) follows by Proposition \ref{ext-morf} with $g=f_R$ whenever $f\in \mathcal{MV}_s(A,{\mathbf U}(V))$.  In order to prove (2),  assume that  $g\in \mathcal{RMV}_s({\mathbf R}(A), V)$ and set $f={\mathbf U}(g)\circ\iota_A$. Hence ${\mathbf R}(f)$ is the unique homomorphism such that $${\mathbf U}({\mathbf R}(f))\circ\iota_A=\iota_{{\mathbf U}(V)}\circ f.$$  Therefore we have ${\mathbf U}({\mathbf R}(f))\circ\iota_A=\iota_{{\mathbf U}(V)}\circ {\mathbf U}(g)\circ\iota_A. $ Since
$\iota_A$ is an embedding, we get that ${\mathbf U}({\mathbf R}(f))=\iota_{{\mathbf U}(V)}\circ {\mathbf U}(g) $.  

We note that $\iota_{{\mathbf U}(V)}={\mathbf U}(\iota_V)={\mathbf U}(\varepsilon_V^{-1})$. It follows that 
${\mathbf U}(\varepsilon\circ{\mathbf R}(f))={\mathbf U}(g)$, so $\varepsilon\circ{\mathbf R}(f)=g$. 
\end{proof}

In the following we prove that the hull functor ${\mathbf R}$ and the functor $\Gamma$ commute.

\begin{remark}\label{helpG}
{\normalfont
Let $A$ be a semisimple MV-algebra and $(G,u)$ an $\ell u$-group such that $A=\Gamma(G,u)$. If $X\subseteq A$  and $\langle X\cup\{u\}\rangle_{\ell}$ is the $\ell $-group generated by $ X\cup\{u\}$ in $G$, then we note that $(\langle X\cup\{u\}\rangle_{\ell},u)$ is an $\ell u$-subgroup of $(G,u)$. As a consequence,  we infer that $\Gamma(\langle X\cup\{u\}\rangle_{\ell},u)$ is an MV-subalgebra of $A$.  It is now straightforward that 
$\langle X\rangle_{MV}=\Gamma(\langle X\cup\{u\}\rangle_{\ell},u)$.  Assume now that  $V$ is a Riesz MV-algebra, $(H,u)$ is a unital Riesz space such that $\Gamma_R(H,u)=V$  and $X\subseteq V$. It is straightforward that 
$\langle X\rangle_{RMV}=\Gamma_R(\langle X\cup\{u\}\rangle_{v\ell},u)$, where  $\langle X\cup\{u\}\rangle_{v\ell}$ is the Riesz space generated by $X\cup\{u\}$ in $H$.

}
\end{remark}

\begin{proposition}\label{comG}
If $A$ is a semisimple MV-algebra and $(G,u)$ an $\ell u$-group such that $A=\Gamma(G,u)$, then 
${\mathbf R}(A)=\Gamma_R({\mathbf R}(G), u)$.
\end{proposition}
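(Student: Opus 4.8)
The plan is to transport the entire MV-algebraic construction of ${\mathbf R}(A)$ through the functors $\Gamma$ and $\Gamma_R$, recovering at each stage the corresponding $\ell$-group step of Conrad's v-hull, and then to match the two generated structures by means of Remark \ref{helpG}. First I would identify the intermediate objects. By Remark \ref{doidiv}, the divisible hull $\semd{A}$ of $A=\Gamma(G,u)$ is exactly $\Gamma(G_d,u)$, where $G_d$ is the divisible hull of $G$. Next, by Lemma \ref{dmh} — using that $\Gamma$ preserves completeness and that the Dedekind-MacNeille completion of a divisible $\ell$-group is a Riesz space — the completion $\semd{\dm{A}}$ equals $\Gamma_R(\hat{G}_d,u)$, where $\hat{G}_d$ is the D-M completion of $G_d$. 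Since D-M completions are unique up to isomorphism and $\Gamma$ is an equivalence, I may identify $\hat{G}_d$ with the unital Riesz space underlying $\semd{\dm{A}}$. Thus the ambient Riesz MV-algebra in which ${\mathbf R}(A)$ is formed is precisely $\Gamma_R(\hat{G}_d,u)$, and $\hat{G}_d$ is exactly the ambient Riesz space in which Conrad's ${\mathbf R}(G)$ is formed.

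Then I would apply the Riesz-space part of Remark \ref{helpG} with ambient space $\hat{G}_d$ and generating set $\sem{A}\simeq A$. Since $u\in A$, we have $A\cup\{u\}=A$, so ${\mathbf R}(A)=\langle A\rangle_{RMV}=\Gamma_R(\langle A\rangle_{v\ell},u)$, where $\langle A\rangle_{v\ell}$ is the Riesz subspace of $\hat{G}_d$ generated by $A$. It remains to show $\langle A\rangle_{v\ell}={\mathbf R}(G)$. For this, note that Remark \ref{helpG} applied to $X=A$ also gives $\langle A\rangle_\ell=G$ inside $G$, hence inside $\hat{G}_d$. Consequently $\langle A\rangle_{v\ell}$ contains $\langle A\rangle_\ell=G$, and being closed under scalar multiplication it contains each $\tfrac1n\,g$ with $g\in G$, i.e.\ it contains $G_d$; therefore $\langle A\rangle_{v\ell}\supseteq\langle G_d\rangle_{v\ell}={\mathbf R}(G)$. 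Conversely $A\subseteq G\subseteq G_d\subseteq{\mathbf R}(G)$, and ${\mathbf R}(G)$ is a Riesz subspace, so $\langle A\rangle_{v\ell}\subseteq{\mathbf R}(G)$. The two are thus equal, and combining gives ${\mathbf R}(A)=\Gamma_R(\langle A\rangle_{v\ell},u)=\Gamma_R({\mathbf R}(G),u)$.

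The main obstacle I expect is the bookkeeping of identifications, so that all constructions genuinely take place inside one fixed ambient Riesz space $\hat{G}_d$ (the Riesz space underlying $\semd{\dm{A}}$) rather than merely up to isomorphism; this is precisely what upgrades the conclusion from ``isomorphic'' to the equality asserted in the statement. The only genuine computation is the equality of the Riesz subspaces generated by $A$, by $G$, and by $G_d$, which reduces to the standard fact — already recorded in Remark \ref{helpG} — that an $\ell u$-group is generated as an $\ell$-group by its unit interval; every other step is a direct transport through $\Gamma$ and $\Gamma_R$ of results proved earlier in the paper.
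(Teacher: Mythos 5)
Your proposal is correct and follows essentially the same route as the paper: identify $\semd{A}=\Gamma(G_d,u)$ and $\semd{\dm{A}}=\Gamma_R(\hat{G}_d,u)$, then use Remark \ref{helpG} to translate the generated Riesz MV-subalgebra into a generated Riesz subspace of $\hat{G}_d$ and match it with Conrad's ${\mathbf R}(G)$. The only (immaterial) difference is that you generate from $A$ and argue that closure under scalars recovers $G_d$, whereas the paper generates from $\semd{A}$ and uses $\langle\semd{A}\rangle_{v\ell}=\langle\semd{G}\rangle_{v\ell}$ directly.
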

\begin{proof} 
We recall that $\semd{A}=\Gamma(\semd{G},u)$, so $\semd{A}\subseteq\semd{G}\subseteq \semd{\hat{G}}$. Moreover, $\semd{\hat{G}}$ is a Riesz space and ${\mathbf R}(G)= \langle \semd{G}\rangle_{v\ell}$, i.e. ${\mathbf R}(G)$ is the Riesz space generated by $\semd{G}$ in $\semd{\hat{G}}$.
Following  Remark \ref{helpG} we have ${\mathbf R}(A)=\langle \semd{A}\rangle_{RMV}=\Gamma_R(\langle \semd{A}\cup\{u\}\rangle_{v\ell},u)$. Since $u\in\semd{A}$ and $\langle \semd{A}\rangle_{v\ell}=\langle \semd{G}\rangle_{v\ell}$,  we get
\begin{center}
${\mathbf R}(A)=\Gamma_R(\langle \semd{G}\rangle_{v\ell},u)=\Gamma_R({\mathbf R}(G),u)$.
\end{center}
\end{proof}

We denote by $\mathcal{AG}_{ua}$ the category of archimedean $\ell u$-groups and by $\mathcal{RS}_{ua}$ the category of archimedean  Riesz spaces with strong unit. By \cite{Bleier-1971}, the correspondence $G\mapsto{\mathbf R}(G)$, which associates to an $\ell$-group its {\em v}-hull,   is functorial. If $G$ has a strong unit $u$, following Conrad's construction, one can easily see that $u$ is also a strong unit of ${\mathbf R}(G)$. Hence we get a functor 
 ${\mathbf R}:\mathcal{AG}_{ua}\to \mathcal{RS}_{ua}$.

\begin{theorem}
The following diagram is commutative:
\begin{center}
      \begin{tikzpicture}[
  font=\sffamily,
  every matrix/.style={ampersand replacement=\&,column sep=2cm,row sep=1cm},
  source/.style={thick,inner sep=.1cm},
  to/.style={->,>=stealth',shorten >=.1pt,semithick,font=\sffamily\footnotesize}]
  
 \matrix{
    \node[source] (A) {$\mathcal{AG}_{ua}$}; 
    \& \node[source] (tensorA) {$\mathcal{MV}_s$};   \\
    
    \node[source] (B) {$\mathcal{RS}_{ua}$}; 
    \& \node[source] (tensorB) {$\mathcal{RMV}_s$};   \\
 };
\draw [to]  (A) -- node[midway,left] {${\mathbf R}$} (B) ;   
\draw [to]  (A) -- node[midway,above] {$\Gamma$}  (tensorA) ; 
\draw [to]  (B) -- node[midway,above] {$\Gamma_R$}  (tensorB) ;  
\draw [to] (tensorA) -- node[midway,right] {${\mathbf R}$} (tensorB) ;
\end{tikzpicture}
  \end{center}
\end{theorem}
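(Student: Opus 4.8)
The commutativity on objects is precisely Proposition~\ref{comG}: for an archimedean $\ell u$-group $(G,u)$, writing $A=\Gamma(G,u)$, the clockwise composite yields $({\mathbf R}\circ\Gamma)(G,u)={\mathbf R}(A)$ while the counter-clockwise composite yields $(\Gamma_R\circ{\mathbf R})(G,u)=\Gamma_R({\mathbf R}(G),u)$, and these coincide by that proposition. The plan is therefore to check that the two functors also agree on morphisms, which I would do by exploiting the uniqueness clause in Lemma~\ref{RMV-homo} (equivalently, Proposition~\ref{ext-morf}), rather than by retracing the whole hull construction.

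Fix a unit-preserving $\ell$-morphism $f\colon(G,u)\to(G',u')$, and set $A=\Gamma(G,u)$, $A'=\Gamma(G',u')$ and $h=\Gamma(f)=f|_{[0,u]}\colon A\to A'$. Under the identifications of Proposition~\ref{comG}, both ${\mathbf R}(\Gamma(f))={\mathbf R}(h)$ and $\Gamma_R({\mathbf R}(f))$ are MV-homomorphisms from ${\mathbf R}(A)=\Gamma_R({\mathbf R}(G),u)$ to ${\mathbf R}(A')=\Gamma_R({\mathbf R}(G'),u')$. By Lemma~\ref{RMV-homo}, the map ${\mathbf R}(h)$ is the \emph{unique} homomorphism $g$ satisfying $g\circ\iota_A=\iota_{A'}\circ h$; hence it suffices to show that $\Gamma_R({\mathbf R}(f))$ satisfies the same identity, that is, $\Gamma_R({\mathbf R}(f))\circ\iota_A=\iota_{A'}\circ h$.

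To verify this I would unwind the identification of Proposition~\ref{comG}: under it the embedding $\iota_A$ becomes the concrete inclusion $A=[0,u]_G\subseteq[0,u]_{{\mathbf R}(G)}=\Gamma_R({\mathbf R}(G),u)$ arising from the chain $G\hookrightarrow\semd{G}\hookrightarrow\semd{\hat{G}}$, so that for $a\in A$ the element $\iota_A(a)$ is just $a$ regarded inside ${\mathbf R}(G)$. Consequently $\Gamma_R({\mathbf R}(f))(\iota_A(a))={\mathbf R}(f)(a)$. The decisive input here is the functoriality of the $\ell$-group $v$-hull established by Bleier~\cite{Bleier-1971}: the morphism ${\mathbf R}(f)$ is obtained by extending $f$ first to the divisible hulls and then to the Dedekind--MacNeille completions, so it restricts to $f$ on $G$. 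Since $a\in[0,u]_G\subseteq G$, this gives ${\mathbf R}(f)(a)=f(a)=h(a)$, and under the identification of ${\mathbf R}(A')$ the latter is exactly $\iota_{A'}(h(a))$. Thus $\Gamma_R({\mathbf R}(f))\circ\iota_A=\iota_{A'}\circ h$, and the uniqueness in Lemma~\ref{RMV-homo} forces $\Gamma_R({\mathbf R}(f))={\mathbf R}(\Gamma(f))$, which is the required commutativity on morphisms.

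The only genuine difficulty lies in the bookkeeping of the preceding paragraph: one must carefully match the abstract embeddings $\iota_A,\iota_{A'}$ with the concrete inclusions into ${\mathbf R}(G)$ and ${\mathbf R}(G')$, and invoke the extension property of Bleier's functor asserting that ${\mathbf R}(f)$ genuinely prolongs $f$. Everything else is formal, since both $v$-hull and Riesz-hull morphisms are built by the same two-step extension (divisible hull followed by Dedekind--MacNeille completion), and the functors $\Gamma,\Gamma_R$ are compatible with each step; once these compatibilities are recorded, the equality of the two composites is immediate.
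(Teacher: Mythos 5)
Your proposal is correct and, on the object level, is exactly the paper's argument: the authors prove the theorem by declaring it "a straightforward consequence of Proposition \ref{comG}." What you add is the verification that the two composites also agree on morphisms, which the paper leaves entirely implicit; your argument for this -- identifying $\iota_A$ with the concrete inclusion $[0,u]_G\subseteq [0,u]_{\mathbf{R}(G)}$, noting that Bleier's $\mathbf{R}(f)$ restricts to $f$ on $G$, and then invoking the uniqueness clause of Lemma \ref{RMV-homo} -- is sound and is the natural way to discharge the step the authors gloss over.
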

\begin{proof}
It is a straightforward consequence of Proposition \ref{comG}.
\end{proof}

\bigskip

{\bf Acknowledgment.} I. Leu\c stean  was partially  supported by the strategic grant \mbox{POSDRU/89/1.5/S/58852,}  cofinanced by ESF within SOP HRD 2007-2013.

\bibliography{IL-DD-bib-rev}
\bibliographystyle{plain}


\end{document}